\definecolor{googleblue}{HTML}{2A5DB0}
\definecolor{iconred}{HTML}{da4453}
\theoremstyle{plain}
\newtheorem{theorem}{Theorem}[section]
\newtheorem{proposition}[theorem]{Proposition}
\newtheorem{lemma}[theorem]{Lemma}
\theoremstyle{definition}
\newtheorem{definition}[theorem]{Definition}
\newtheorem{remark}[theorem]{Remark}
\theoremstyle{remark}
\DeclareFontFamily{U}{MnSymbolA}{}
\DeclareFontShape{U}{MnSymbolA}{m}{n}{
	<-6>  MnSymbolA5
	<6-7>  MnSymbolA6
	<7-8>  MnSymbolA7
	<8-9>  MnSymbolA8
	<9-10> MnSymbolA9
	<10-12> MnSymbolA10
	<12->   MnSymbolA12}{}
\DeclareFontShape{U}{MnSymbolA}{b}{n}{
	<-6>  MnSymbolA-Bold5
	<6-7>  MnSymbolA-Bold6
	<7-8>  MnSymbolA-Bold7
	<8-9>  MnSymbolA-Bold8
	<9-10> MnSymbolA-Bold9
	<10-12> MnSymbolA-Bold10
	<12->   MnSymbolA-Bold12}{}
\DeclareSymbolFont{MnSyA}{U}{MnSymbolA}{m}{n}
\DeclareMathSymbol{\leftharpoondown}{\mathrel}{MnSyA}{'112}
\DeclareMathSymbol{\rightharpoonup}{\mathrel}{MnSyA}{'100}
\DeclareMathSymbol{\mn@relbar}{\mathrel}{MnSyA}{'320}
\def\leftharpoonfill@{\arrowfill@\leftharpoondown\mn@relbar\mn@relbar}
\def\rightharpoonfill@{\arrowfill@\mn@relbar\mn@relbar\rightharpoonup}
\DeclareRobustCommand{\overleftharpoon}{\mathpalette{\overarrow@\leftharpoonfill@}}
\DeclareRobustCommand{\overrightharpoon}{\mathpalette{\overarrow@\rightharpoonfill@}}
\newcommand*{\edge}[1]{\overrightharpoon{#1}}
\newcommand*{\textedge}[1]{\smash[b]{\overrightharpoon{#1}}}
\newcommand*{\comp}[1]{#1^{\!\mathrm{c}}}
\newcommand*{\boundary}[1]{\partial #1}
\newcommand*{\euler}{e}
\newcommand*{\ind}{\mathds{1}}
\title{Dynamical Gibbs-non-Gibbs transitions in \\ Widom-Rowlinson models on trees}
\author{Sebastian Bergmann\footnote{Ruhr-Universit\"at Bochum, Fakult\"at f\"ur Mathematik, Universit\"atsstra\ss e 150, 44780 Bochum, Germany. E-mail: sebastian.bergmann@rub.de, sascha.kissel@rub.de, christof.kuelske@rub.de} \and Sascha Kissel\footnotemark[\value{footnote}] \and Christof K\"ulske\footnotemark[\value{footnote}]}
\date{\today}
\begin{document}
\maketitle
\begin{abstract}
{\noindent\textbf{Abstract:} We consider the soft-core Widom-Rowlinson model for particles with spins and holes, on a Cayley tree of order $d$ (which has $d+1$ nearest neighbours), depending on repulsion strength $\beta$ between particles of different signs and on an activity parameter $\lambda$ for particles. We analyse Gibbsian properties of the time-evolved intermediate Gibbs measure of the static model, under a spin-flip time evolution, in a regime of large repulsion strength $\beta$.

We first show that there is a dynamical transition, in which the measure becomes non-Gibbsian at large times, independently of the particle activity, for any $d \geq 2$. In our second and main result, we also show that for large $\beta$ and at large times, the measure of the set of bad configurations (discontinuity points) changes from zero to one as the particle activity $\lambda$ increases, assuming that $d \geq 4$. Our proof relies on a general zero-one law for bad configurations on the tree, and the introduction of a set of uniformly bad configurations given in terms of subtree percolation, which we show to become typical at high particle activity.}
\end{abstract}
\vfill
\begin{center}
\textbf{AMS 2020 subject classification:} 82B20, 82C20, 60K35
\end{center}
{\noindent\small\textbf{Keywords:} Widom-Rowlinson model, Gibbs measures, Non-Gibbsianness, Stochastic dynamics, Dynamical Gibbs-non-Gibbs transitions, Phase transitions, Cayley tree, Percolation, Zero-one law.}
\clearpage

\section{Introduction}

Dynamical Gibbs-non-Gibbs transitions for spin models can be analysed on various types of graphs. A prototypical example is the low-temperature Ising model on the integer lattice in two or more dimensions, under stochastic independent spin-flip dynamics \cite{van_enter_possible_2002}. The authors showed in particular that the time-evolved model, started from the initial plus Gibbs measure in zero external field, fails to be Gibbsian for all large enough finite times, while the Gibbs property is preserved for small times.

This study of the Ising model has been extended to mean-field and Kac-models, where the appropriate notion of sequential Gibbsianness has been employed \cite{fernandez_variational_2013,fernandez_variational_2014,den_hollander_gibbs-non-gibbs_2015,kulske_spin-flip_2007}. For studies of Gibbsian properties of the Potts model under time evolution and also other transformations, see \cite{haggstrom_is_2003,haggstrom_gibbs_2004,henning_gibbsnon-gibbs_2019,kulske_dynamical_2021}.

A famous model in the world of point particles taking positions in $\mathbb{R}^d$ is the Euclidean Widom-Rowlinson model. In its original form the particles carry one of the possible signs plus or minus and are subjected to hardcore pair interactions, which forbids inter-particle distances smaller than a fixed radius $R>0$ when they carry different signs. In equilibrium the continuum model shows a ferromagnetic transition, see \cite{bricmont_structure_1984,chayes_analysis_1995}, for dynamical Gibbs-non-Gibbs transitions, see \cite{jahnel_widom-rowlinson_2017}, for generalities on Gibbsian point processes see \cite{dereudre_existence_2012}.

In this note we turn to the soft-core Widom-Rowlinson model on trees under a spin-flip time evolution. The model has the local state space $\{-1,0,1\}$, where the spin value $0$ stands for an empty site. It has two parameters $\lambda>0$, the activity of particles, and $\beta>0$ describing a soft-core repulsion between neighbouring particles of different signs.

Static and dynamical soft-core Widom-Rowlinson models have been studied on the lattice and in mean field, see \cite{higuchi_results_2004,kissel_dynamical_2019,kissel_dynamical_2020,kozitsky_phase_2018}.

The static behaviour of the soft-core Widom-Rowlinson model on a Cayley tree of order $d \geq 2$ is known, see \cite{kissel_hard-core_2019}, where $d$-dependent non-uniqueness regions in the space of the parameters $\beta,\lambda$ are described. In these regions there exist at least three different tree-indexed Markov chain Gibbs measures (splitting Gibbs measures) which are tree-automorphism invariant. Among these there is a unique measure which is also spin-flip invariant, the so-called \emph{intermediate measure} $\smash[b]{\mu^{\#}_{\beta,\lambda}}$. In this note we focus on the corresponding \emph{time-evolved intermediate measure} $\smash[b]{\mu^{\#}_{\beta,\lambda,t}}$ which is obtained by drawing the initial condition w.r.t. the infinite-volume measure $\smash[b]{\mu^{\#}_{\beta,\lambda}}$ and applying independent stochastic spin-flips to the spins at the occupied sites while keeping the holes fixed.

Previous study has shown that in the time-evolved Ising model in zero external field a quite unusual behaviour (compared to lattices) occurs on regular trees \cite{van_enter_gibbs-non-gibbs_2012}. One feature was that at large $\beta$ and sufficiently large times \emph{all configurations} become \emph{bad} for the time-evolved intermediate measure $\mu^{\#}_{\mathrm{Ising};\beta,t}$. Bad configurations are non-removable discontinuity points of finite-volume conditional probabilities in the sense of Definition \ref{def:essential_discontinuity}. If there exists at least one bad point, a Gibbsian representation with a well-behaved specification is impossible. If almost all (or even all) configurations are bad, this signals a particularly strong internal non-locality of the system.

As for the Widom-Rowlinson model, the intermediate Ising measure can be defined to be the unique spin-flip invariant tree-automorphism invariant Gibbs measure which is also a tree-indexed Markov chain. The full measure badness was seen only in the time-evolved intermediate Ising measure, for different Ising measures as starting measure even recovery of the Gibbs property for large times was proved in \cite{van_enter_gibbs-non-gibbs_2012}.

On the other hand, full-measure badness for time-evolved Widom-Rowlinson measures was discovered in models with hard-core interaction, both in the continuum and on the lattice \cite{jahnel_widom-rowlinson_2017,kissel_dynamical_2020}. The mechanisms responsible for this in both models were based on the hard-core interactions and were much different from the mechanism responsible for the result for the Ising model on a tree. All studies of the time-evolved Widom-Rowlinson model with soft-core interactions on discrete graphs have shown non-Gibbsian behaviour at large times, but did not reveal full measure bad configurations.

How much of this all-badness can we expect to survive in the time-evolved intermediate measure, and what is the role of the second parameter, the activity $\lambda$?

\subsection*{Results and some proof ideas}

Our results split into criteria for badness of the time-evolved measure given in Subsection \ref{subsec:results_badness} and criteria for goodness of the time-evolved measure given in Subsection \ref{subsec:results_goodness}.

Let us explain informally our main badness result of Theorem \ref{thm:almost-sure-badness} which we consider to be the most interesting part. It states that on Cayley trees of order $d \geq 4$ for large enough repulsion strength $\beta$, and large activity $\lambda$, at large enough times, the time-evolved measure has bad configurations of full measure. Bad configurations are by definition discontinuity points of conditional probabilities of the time-evolved measure in the sense of Definition \ref{def:essential_discontinuity}.

To appreciate this result note that our starting measure is provably non-extremal in a regime of large $\beta$ and large $\lambda$ by the Kesten-Stigum criterion \cite{kesten_additional_1966}, see Proposition \ref{lemma:non-extremality_intermediate}. While bad configurations generally form a tail-event, our measure of interest will not be extremal and therefore not tail-trivial, therefore there is a priori no reason why their probabilities should be restricted to zero or one. However, we are able to formulate a different zero-one law in Theorem \ref{thm:zero-one_law} which holds in all parameter regimes, and does not assume tail-triviality, but from which the desired statement follows. It applies more generally not only to the intermediate measure, but to all homogeneous tree-indexed Markov chain Gibbs measures of our model. The proof we give uses a representation of configurations of the measure in terms of the (finite or infinite) connected components by means of a renewal construction on the tree, starting from an arbitrary root, see Subsection \ref{subsec:zero-one}.

Using our zero-one law of Theorem \ref{thm:zero-one_law}, we notice that for the proof of the full-measure badness in Theorem \ref{thm:almost-sure-badness}, it suffices to find positive measure sets of bad configurations. It turns out that sets with this property can be given in terms of the subtree percolation condition of Theorem \ref{thm:condition_bad_configurations}. This condition on infinite-volume configurations of the Widom-Rowlinson model looks only at the occupied sites, i.e. the sites with spin-values not equal to zero, disregarding the signs, and asks for the existence of an infinite occupied subtree of large enough order $s$.

A glance at the result of Theorem \ref{thm:condition_bad_configurations} then also shows that for any order $d\geq 2$ there is a dynamical transition, in which the measure becomes non-Gibbsian at large times, for large $\beta$, independently of the particle activity. This is clear, as it suffices to exhibit just one particular bad configuration, and for $s=d$ the theorem shows that the fully occupied configuration is always bad.

Let us outline some of the ideas and difficulties of the proof of Theorem \ref{thm:condition_bad_configurations}, i.e. explain how to ensure badness of configurations with percolating subtrees. The detailed proof will be given in Subsection \ref{subsec:subtree_badness}. Starting from a two-layer representation of the conditional probabilities of the time-evolved model one is first led to an inhomogeneous recursion on a subtree of occupied sites which needs to be run on the first layer (spin configurations at time zero). In this recursion the influence of the configuration in the conditioning on the second layer (infinite-volume configurations at time $t$) appears as an inhomogeneous magnetic-field term, and unoccupied sites act as a dilution on the first layer. Non-removable discontinuities (bad points of the time-evolved measure) come from non-decaying memory on the boundary condition in this recursion, and this is what needs to be proved to ensure badness. There are mainly two possibly counteracting influences in this recursion, and this is what creates new difficulty for the Widom-Rowlinson model as compared to the non-homogeneous recursion for the Ising model \cite{bissacot_stability_2017,van_enter_gibbs-non-gibbs_2012}. First, there are the terms coming from the infinite tree of occupied sites attached at the origin, from we have cut off all finite parts. As for the second part, there are also contributions caused by the non-percolating appendices of the tree. These may work in the opposite direction, depending on the choice of the signs of the configuration. The proof then consists in showing that, not only are the percolating parts able to carry a boundary condition to the origin regardless of the second-layer spins, but they also win against possible counteractions from the finite parts. The proof of the all-badness result of Theorem \ref{thm:almost-sure-badness} is then completed by ensuring typicality of $s$-subtree percolation of occupied sites, see Proposition \ref{prop:probability_subtrees}.

In Section \ref{subsec:results_goodness} we give two results on goodness. The first is the small-time Gibbs property of Theorem \ref{thm:almost-sure-goodness_dobrushin}, which follows by the Dobrushin method. The second result of Theorem \ref{thm:almost-sure-goodness} asserts that at any possibly large $\beta$, the set of bad configurations, while not necessarily empty, has zero measure, if the activity $\lambda$ is sufficiently small. The proof is carried out again in the two-layer picture, via comparison to extinction of a suitable Galton-Watson tree.

Finally, from Theorem \ref{thm:almost-sure-badness} and Theorem \ref{thm:almost-sure-goodness} we conclude, that for any fixed large enough $\beta$ and large enough time, a $\lambda$-driven transition between a non-Gibbs regime with zero measure bad configurations to a regime with full measure bad configurations occurs.

The remainder of the paper is organized as follows. In Subsection \ref{subsec:definition_notation} we define the soft-core Widom-Rowlinson model, review the relation between tree-indexed Markov chains and boundary laws provided by Zachary's Theorem \ref{thm:bl_gibbs_measure}, and define the time-evolved measure and notion of a bad configuration. Subsection \ref{subsec:results_badness} contains our results on badness, Subsection \ref{subsec:results_goodness} contains our results on goodness, and Sections \ref{sec:proofs_badness} and \ref{sec:proofs_goodness} contain the proofs.

\section{Model and main results}

\subsection{Definitions and notation}
\label{subsec:definition_notation}

\subsubsection{The soft-core Widom-Rowlinson model on trees}

A \emph{graph} $(V\!,E)$ is a \emph{vertex set} $V$ in combination with a set of \emph{edges} $E \subset V^2$. If two vertices are neighbours in the sense that they are connected through an edge we write $i \sim j$. We will denote the set of \emph{oriented edges}, which is the set of ordered pairs in $E$, as $\textedge{E}$ and its elements by $\textedge{ij}$ or in unambiguous cases $ij$ to lighten the notation. Given a subset $\Lambda \subset V$ of the vertex set we write $\Lambda\Subset V$ if it is finite and denote its \emph{boundary} by $\boundary{\Lambda}$, this is the set of vertices in $\comp{\Lambda}$ directly connected to $\Lambda$ through an edge:
\begin{equation}
\label{eq:boundary_definition}
\boundary{\Lambda}%
:= \big\{i\in \comp{\Lambda}\,\big|\,\exists j\in\Lambda,\,i\sim j\big\}.%
\end{equation}
We sometimes abuse this notation and write $\boundary{i}$ for the neighbours of $i \in V$. A \emph{path} between two vertices $i,j\in V$ will be a finite collection of vertices $(k_0,\ldots,k_N)$ such that $k_0 = i$, $k_N = j$ and $k_n \sim k_{n+1}$ for all $n = 0,\ldots,N-1$. We will call this path non-repeating (or equivalently self-avoiding) if $k_n \neq k_m$ for all $n \neq m$. If at least one such path exists between every two vertices of a subset $\Lambda$ we will call $\Lambda$ \emph{connected}. In particular we consider trees, connected graphs where each vertex has a finite number of neighbours and where only one unique non-repeating path between two vertices $i,j \in V$ exists. We denote this path by $\mathcal{P}(i,j)$, the length of the path defines a metric on the tree via $d(i,j) := N\,$ if $\ \mathcal{P}(i,j)=(k_0,\ldots,k_N)$. A tree is called Cayley tree of order $d$ if each vertex has exactly $d+1 \in \mathbb{N}$ neighbours.

For the Widom-Rowlinson model we introduce a copy of the spin space $\{-1,0,1\}$ on each vertex. Combined they form the \emph{configuration space} $\Omega:=\{-1,0,1\}^V$ which is endowed with the $\sigma$-algebra $\mathcal{F}:=\mathcal{P}(\{-1,0,1\})^{\otimes V}$. Let $\Lambda \subset V$ be any set, we denote by $\Omega_{\Lambda}$ the set of configurations $\omega_{\Lambda} := (\omega_i)_{i \in \Lambda}$ restricted on $\Lambda$, and by $\sigma_{\Lambda}:\Omega \rightarrow \Omega_{\Lambda}$ the mapping with $\sigma_{\Lambda}(\omega)=\omega_{\Lambda}$ for each $\omega \in \Omega$. We write $\omega_{A}\eta_{B} \in \Omega_{A \cup B}$ for the concatenation of configurations on disjoint sets $A, B \subset V$. The $\sigma$-algebra on $\Omega$ generated by the projections to a subset $\Lambda \subset V$ is denoted by $\mathcal{F}_{\Lambda}$. If a function $f\,:\,\Omega \rightarrow \mathbb{R}$ is $\mathcal{F}_\Lambda$-measurable for $\Lambda \Subset V$, $f$ is called a local function. A function $f$ is called quasilocal on $\Omega$ if there exists a sequence of local functions $(f_n)_{n\in \mathbb{N}}$ with $\lim_{n\rightarrow \infty} \Vert f-f_n\Vert_{\infty} = 0$. Note that for finite state spaces quasilocality is equivalent to continuity with respect to the product topology.

The soft-core Widom-Rowlinson model is a natural extension of the original Widom-Rowlinson model on graphs where neighbouring vertices are not allowed to take the same spin value. For our model this hard-core type restriction is weakened. Here, it is allowed that vertices with $+1$ and $-1$ spins are nearest neighbours, however this will be punished by a repulsion parameter $\beta>0$ called the inverse temperature. The interaction of the model can be written as a potential
\begin{equation}
\Phi_{\Lambda}(\omega) = \left\{%
\begin{array}{ccc}
\beta \ind_{\{\omega_i\omega_j =-1\}} && \text{if}\ \Lambda =\{i,j\} \in E\\%
-h\omega_i -\log(\lambda)\omega_i^2 && \text{if}\ \Lambda = \{i\}\\%
0 && \text{else}%
\end{array} \right.%
\end{equation}
where $h\in\mathbb{R}$ is an external magnetic field and $\lambda>0$ serves as an activity parameter of occupied sites. Throughout this paper we consider the case of $h=0$.

To define Gibbs measures we need the notion of specifications. These are a families of probability kernels $\gamma=(\gamma_\Lambda)_{\Lambda \Subset V}$ from $\mathcal{F}_{\comp{\Lambda}}$ to $\mathcal{F}$ respectively, which satisfy the properness condition $\gamma_\Lambda(A|\,\cdot\,) = \ind_A(\,\cdot\,)$ for all $A \in \mathcal{F}_{\comp{\Lambda}}$, and the consistency condition $\gamma_{\Delta}\gamma_{\Lambda} = \gamma_{\Delta}$ for all $\Lambda \subset \Delta \Subset V$. A specification is called quasilocal if for each $\Lambda \Subset V$ and each quasilocal function $f: \Omega\rightarrow \mathbb{R}$ the function
\begin{equation}
\gamma_\Lambda(f|\,\cdot\,) := \int_\Omega \gamma_\Lambda(d\omega|\,\cdot\,)f(\omega)%
\end{equation}
is quasilocal. We say a measure $\mu$ on $(\Omega,\mathcal{F})$ is a Gibbs measure, if it satisfies the Dobrushin-Lanford-Ruelle equations for a quasilocal specification, i.e.
\begin{equation}
\mu = \mu\gamma_{\Lambda}%
\end{equation} 
for each $\Lambda\Subset V$. This condition is equivalent to $\mu_{\Lambda}(f|\,\cdot\,) := \mu(f|\mathcal{F}_{\comp{\Lambda}})(\,\cdot\,) = \gamma_\Lambda(f|\,\cdot\,)$ $\mu$-almost-surely for each measurable function $f:\Omega \rightarrow \mathbb{R}$. Given a specification $\gamma$ we write $\mathcal{G}(\gamma)$ for the set of all Gibbs measures admitted by this specification. As $\mathcal{G}(\gamma)$ is a simplex we are particularly interested in its extremal points, the \emph{extremal Gibbs measures}.

For the potential of the soft-core Widom-Rowlinson model we can define a specification through the probability kernels defined by
\begin{equation}
\gamma_\Lambda(\sigma_{\Lambda}=\omega_\Lambda|\eta_{\comp{\Lambda}})%
= \frac{1}{Z_{\Lambda}(\eta_{\comp{\Lambda}})}%
\exp\big(-\mathcal{H}_\Lambda(\omega_{\Lambda}\eta_{\comp{\Lambda}})\big)%
\qquad \omega,\eta \in \Omega,%
\end{equation}
with the finite-volume Hamiltonian $\mathcal{H}_{\Lambda}(\omega) = \sum_{A\cap\Lambda\neq\emptyset,\,A\Subset V} \Phi_{A}(\omega)$ for all $\Lambda\Subset V$. The function $Z_{\Lambda}$ is called \emph{partition function} and is defined such that $\gamma_{\Lambda}(\,\cdot\,|\eta_{\comp{\Lambda}})$ is a probability measure for each $\eta$ in $\Omega$.

\subsubsection{Tree-indexed Markov chains and boundary laws}

We briefly review the notion of tree-indexed Markov chains and Zachary's theorem stating a one-to-one correspondence between boundary laws and Markov chain Gibbs measures. We begin with the definition of a Markov specification
\begin{definition}
\label{def:markov_specification}
A specification $\gamma=(\gamma_{\Lambda})_{\Lambda \Subset V}$ is called a \emph{Markov specification} if for each region $\Lambda\Subset V$ and all fixed spin configurations $\omega_{\Lambda}\in\Omega_{\Lambda}$ the specification density $\gamma_{\Lambda}(\omega_{\Lambda}|\,\cdot\,)$ is $\mathcal{F}_{\boundary{\Lambda}}$-measurable.
\end{definition}
One can easily see that the specification associated to the Widom-Rowlinson model is Markovian. To define the stronger notion of tree-indexed Markov chains we need a concept of past. We define the set of vertices that lies in the past of an oriented edge $\textedge{ij}$ as those vertices whose unique path to $i$ does not pass over the edge $\textedge{ij}$ and thereby does not contain $j$:
\begin{equation}
(-\infty,\edge{ij}) := \big\{k \in V\,\big|\, j\notin\mathcal{P}(i,k)\big\}.%
\end{equation}
This leads to the definition of a tree-indexed Markov chain.
\begin{definition}
\label{def:tree_indexed_markov_chain}
Let $(V\!,E)$ be a tree. A measure $\mu$ is called a tree-indexed Markov chain if
\begin{equation}
\label{eq:markov_chain_property}
\mu\big(\sigma_j = \omega_j\,\big|\,\mathcal{F}_{\!(-\infty,\edge{ij})}\big)%
= \mu\big(\sigma_j = \omega_j\,\big|\,\mathcal{F}_{i}\big)%
\qquad \mu-a.s. \quad \forall \omega_j \in \Omega_j,%
\end{equation}
for each edge $\textedge{ij} \in \textedge{E}$.
\end{definition}
There are some connections between Gibbs measures and tree-indexed Markov chains. To explain these relations we need to introduce boundary laws and transfer operators for Markov chains.
\begin{definition}
\label{def:transfer_operator}
We define the family of \emph{transfer operators} $(Q_{\{i,j\}})_{\{i,j\}\in E}$ of the Widom-Rowlinson model for each $\{i,j\} \in E$ by
\begin{equation}
\label{eq:transfer_operator}
Q_{\{i,j\}}(\omega_i,\omega_j)%
:=\exp\left(-\Phi_{\{i,j\}}(\omega_i,\omega_j)%
-\frac{\Phi_{\{i\}}(\omega_i)}{|\boundary{i}|}%
-\frac{\Phi_{\{j\}}(\omega_j)}{|\boundary{j}|}\right).%
\end{equation}
A family of vectors $\smash[b]{(l_{ij})_{ij\in \edge{E}}}$ with each $l_{ij}$ in $(0,\infty)^{\Omega_i}$ is called a \emph{boundary law} consistent with the transfer operators $(Q_{\{i,j\}})_{\{i,j\}\in E}$, if for each $\textedge{ij} \in \textedge{E}$ there exists a positive constant $c_{ij}>0$ such that the consistency equation
\begin{equation}
\label{eq:boundary_law_recursion}
l_{ij}(\omega_i) = c_{ij} \prod_{k \in \boundary{i}\setminus j}%
\sum_{\omega_k\in \Omega_k}Q_{\{k,i\}}(\omega_k,\omega_i)l_{ki}(\omega_k)%
\end{equation}
holds for every $\omega_i \in \Omega_i$.
\end{definition}
Note that boundary laws $\smash[b]{(l_{ij})_{ij\in \edge{E}}}$ are uniquely determined up to a constant only. Hence, one can choose one of the entries of the vectors $l_{ij}$ arbitrarily. In our case it is useful to set $l_{ij}(0)=1$ for every $\textedge{ij} \in \textedge{E}$. With the idea of transfer operators the specification of the Widom-Rowlinson can be rewritten as
\begin{equation}
\gamma_\Lambda(\sigma_{\Lambda}=\omega_\Lambda|\omega_{\Lambda^c} )%
= \frac{1}{Z'_\Lambda(\omega_{\comp{\Lambda}})} \prod_{\substack{\{i,j\}\in E\\ \{i,j\}\cap \Lambda \neq \emptyset}}%
Q_{\{i,j\}}(\omega_i,\omega_j)
\end{equation}
which leads to the following theorem, firstly proven by Zachary: 
\begin{theorem}[Zachary \cite{zachary_countable_1983}]
\label{thm:bl_gibbs_measure}
Let $\gamma=(\gamma_{\Lambda})_{\Lambda\Subset V}$ be a Markov specification on $(\Omega,\mathcal{F})$ with transfer operators $(Q_{\{i,j\}})_{\{i,j\}\in E}$. Then each boundary law $(l_{ij})_{ij\in\textedge{E}}$ consistent with these transfer operators defines a unique tree-indexed Markov chain $\mu \in \mathcal{G}(\gamma)$ via the equation
\begin{equation}
\label{eq:boundary_law_representation}
\mu(\sigma_{\Lambda\cup\boundary{\Lambda}}=\omega_{\Lambda\cup\boundary{\Lambda}})%
=\frac{1}{Z''_{\Lambda}}%
\prod_{\substack{\{i,j\} \in E \\ \{i,j\}\cap\Lambda\neq\emptyset}}%
Q_{\{i,j\}}(\omega_i,\omega_j)%
\prod_{k\in\boundary{\Lambda}}l_{kk_{\Lambda}}(\omega_k)%
\end{equation}
where $\Lambda\Subset V$ is a connected set and $Z''_{\Lambda}\in (0,\infty)$ a suitable normalizing constant. Here $k_{\Lambda}$ denotes the unique nearest neighbour vertex of $k \in \boundary{\Lambda}$ that lies inside of $\Lambda$.

Conversely, every tree-indexed Markov chain $\mu \in \mathcal{G}(\gamma)$ has the above representation through a boundary law $(l_{ij})_{ij \in \textedge{E}}$ which is uniquely defined up to a positive factor.
\end{theorem}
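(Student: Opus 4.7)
The statement is Zachary's two-way correspondence between boundary laws and tree-indexed Markov chain Gibbs measures, so I will treat the two directions separately and identify the projective-consistency calculation as the main step.

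For the forward direction, the plan is to take the right-hand side of (\ref{eq:boundary_law_representation}) as the definition of a candidate family of finite-volume distributions: for each connected $\Lambda\Subset V$, let $\nu_\Lambda$ denote the probability measure on $\Omega_{\Lambda\cup\boundary{\Lambda}}$ prescribed by that formula. The heart of the argument is to verify Kolmogorov consistency of the family $(\nu_\Lambda)$. It suffices to do this for a connected pair $\Lambda\subset\Lambda'=\Lambda\cup\{j\}$ with $j\in\boundary{\Lambda}$; let $i$ denote the unique neighbour of $j$ in $\Lambda$. In the product defining $\nu_{\Lambda'}$, one isolates the edge factors $Q_{\{j,k\}}(\omega_j,\omega_k)$ together with the boundary weights $l_{kj}(\omega_k)$ for the new boundary vertices $k\in\boundary{j}\setminus\{i\}$, and sums over those $\omega_k$. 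The boundary-law recursion (\ref{eq:boundary_law_recursion}) applied at the edge $\textedge{ji}$ then collapses this sum to $c_{ji}^{-1}\,l_{ji}(\omega_j)$, which is precisely the weight at $j$ needed to recover (\ref{eq:boundary_law_representation}) for $\Lambda$. Kolmogorov extension then yields a probability measure $\mu$ on $(\Omega,\mathcal{F})$.

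Given this $\mu$, I would verify that $\mu\in\mathcal{G}(\gamma)$ by computing the conditional probability $\mu(\sigma_\Lambda=\omega_\Lambda\mid\mathcal{F}_{\boundary{\Lambda}})$ directly from (\ref{eq:boundary_law_representation}) for a connected $\Lambda\Subset V$: the boundary-law factors on $\boundary{\Lambda}$ cancel between the numerator and its $\Omega_\Lambda$-marginal, leaving precisely the transfer-operator representation of $\gamma_\Lambda(\omega_\Lambda\mid\omega_{\boundary{\Lambda}})$. For non-connected $\Lambda$ this extends by enclosing $\Lambda$ in a connected superset and using the DLR consistency of specifications. The tree-indexed Markov chain property (\ref{eq:markov_chain_property}) is then immediate from the factorisation over edges in (\ref{eq:boundary_law_representation}): conditioning on $\sigma_i$ severs the two subtrees meeting at $\textedge{ij}$, both in the transfer-operator product and in the boundary weights, so the conditional law of $\sigma_j$ depends only on $\sigma_i$.

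For the converse, start with a tree-indexed Markov chain $\mu\in\mathcal{G}(\gamma)$. I would define a candidate boundary law edge by edge: for each $\textedge{ji}\in\textedge{E}$, the restriction of $\mu$ to the half-tree $(-\infty,\edge{ji})\cup\{i\}$ can be written, using the Markov chain property and the Gibbs property on exhausting finite subsets, as a product of transfer operators along the edges of that half-tree times a residual vertex weight at $i$ depending only on $\omega_i$; call this weight $l_{ji}(\omega_i)$, normalised so that $l_{ji}(0)=1$. Evaluating (\ref{eq:boundary_law_representation}) for $\Lambda=\{i\}$ against the actual measure $\mu$ and matching with this factorisation produces the representation at $\Lambda=\{i\}$; applying the same construction at the vertex $j$ and comparing the two resulting expressions along the edge $\textedge{ji}$ yields the recursion (\ref{eq:boundary_law_recursion}). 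Uniqueness up to a positive factor per edge is read off from the observation that (\ref{eq:boundary_law_representation}) determines every ratio $l_{ji}(\omega_i)/l_{ji}(\tilde\omega_i)$ through finite-volume marginals of $\mu$. The main obstacle I expect is the projective-consistency computation in the forward direction: the product in (\ref{eq:boundary_law_representation}) must be split cleanly into edges lying entirely inside $\Lambda\cup\boundary{\Lambda}$, edges crossing the new boundary, and edges whose far endpoints get summed out, and the resulting sums must be matched against the recursion variables $(l_{ki})$ in the correct direction. Once this bookkeeping is in place, the Gibbs and Markov-chain properties of $\mu$ are short factorisation arguments, and the converse is essentially a read-off via the Markov property.
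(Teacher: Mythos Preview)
The paper does not actually prove this theorem: it is stated as a known result and attributed to Zachary \cite{zachary_countable_1983}, with no proof given in the text. There is therefore nothing in the paper to compare your proposal against. Your outline is a reasonable sketch of the standard argument (Kolmogorov consistency via the boundary-law recursion, followed by verification of the DLR and Markov-chain properties, and then the converse read-off), but for the purposes of this paper the theorem is quoted from the literature rather than proved.
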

In \cite{kissel_hard-core_2019} the authors find solutions of the above recursion problem \eqref{eq:boundary_law_representation} for the soft-core model depending on the values of $\beta$ and $\lambda$. One of the results is that there always exists a solution with the property $l_{ij}(1) = l_{ij}(-1)$ which via Theorem \ref{thm:bl_gibbs_measure} defines a tree-indexed Markov chain and Gibbs measure $\mu^\#$ which we call intermediate measure. It is the only Gibbs measure if $\beta$ is small enough.

\subsubsection{Time evolution, good and bad configurations}

As Gibbs measures describe the equilibrium states of physical systems at constant temperatures, to investigate the behaviour of systems for variable temperatures we need to introduce a suitable transformation. In our case we use a Markovian semigroup $(\pi_t)_{t \in [0,\infty)}$ describing a site-independent spin-flip dynamics whose single-site marginals are given by
\begin{equation}
\label{eq:single_site_kernel}
p_t(\omega,\eta)%
= \frac{1}{2}(1+\euler^{-2t})\ind_{\{\omega=\eta \neq 0\}}%
+ \frac{1}{2}(1-\euler^{-2t})\ind_{\{\omega\eta =-1\}}%
+ \ind_{\{\omega=\eta=0\}}%
\qquad \forall \omega,\eta\in \{-1,0,1\}.%
\end{equation}
This time evolution acts like a heating of the system where the distribution and quantity of occupied sites does not change. For a given initial Gibbs measure $\mu$ on $(\Omega,\mathcal{F})$ at time $t=0$, we then define the time-evolved measure as the action of this semigroup on the initial measure $\mu_t := \mu \pi_t$. The expectation of a local function $f$ under the time-evolved measure is thus given by
\begin{equation}
\label{eq:local_expectation}
\mu_t(f) = \int_{\omega_{\Lambda}\in\Omega_{\Lambda}}%
\int_{\eta_{\Lambda}\in\Omega_{\Lambda}}%
f(\eta)\prod_{i\in\Lambda}p_t(\omega_i,d\eta_i)\mu(d\omega)%
\end{equation}
where $\Lambda$ is the support of $f$. We are interested in the properties of this time-evolved measure, specifically to what extent it still admits to a Gibbsian description. By definition $\mu_t$ would be a Gibbs measure if it is compatible with a quasilocal non-null specification. To contradict this property it therefore suffices to show the existence of a non-removable point of discontinuity for the expected value $\mu_t(f|\mathcal{F}_{\!\comp{\Lambda}})$ of one local function $f$, as this discontinuity would persist in each specification admitting the measure $\mu_t$, thereby showing the non-quasilocality of all compatible specifications. Such points of non-removable discontinuity will be called \emph{bad configurations}, configurations which are not bad are called $\emph{good configurations}$. A configuration $\eta$ is a bad configuration if the measure is essentially-discontinuous at $\eta$ (cp. \cite{fernandez_gibbsianness_2006}):
\begin{definition}
\label{def:essential_discontinuity}
The measure $\mu_t$ is essentially-discontinuous at $\eta\in \Omega$ if there exists a local function $f$ and a region $\Lambda_0\Subset V$ such that
\begin{equation}
\label{eq:essential_discontinuity}
\limsup_{\Lambda\nearrow V}%
\sup_{{\substack{\xi^1,\xi^2\in\Omega \\ \Delta:\Lambda\subset\Delta\Subset V}}}%
\big| \mu_t(f|\eta_{\Lambda\setminus\Lambda_0}\xi^1_{\Delta\setminus\Lambda})%
- \mu_t(f|\eta_{\Lambda\setminus\Lambda_0}\xi^2_{\Delta\setminus\Lambda})\big| > 0.%
\end{equation}
\end{definition}
If $\eta$ is an essential discontinuity in the sense of Definition \ref{def:essential_discontinuity}, it must be a discontinuity point in the product topology for every specification $\gamma$ for which $\mu$ is a compatible measure, which can be quickly seen as follows: Take an arbitrary compatible specification $\gamma$. Then, one  may write the terms with finite-volume conditionings of the form $\mu_t(f|\eta_{\Delta\setminus \Lambda_0})$ appearing in Definition \ref{def:essential_discontinuity} as an integral of $\gamma_{\Lambda_0}(f|\eta_{\Delta\setminus \Lambda_0}\zeta_{\Delta^c})$ over the variables $\zeta_{\Delta^c}$ with respect to some conditional measure which is not important for the argument. Using uniform upper and lower bounds on $\zeta_{\Delta^c}$, this shows that the left hand side in \eqref{eq:essential_discontinuity} is a \textit{lower} bound for the analogous expression involving the kernel $\gamma_{\Lambda_0}$, namely
\begin{equation}
\limsup_{\Lambda \nearrow V} \sup_{\xi^1,\xi^2 \in \Omega}%
\Big(\gamma_{\Lambda_0}(f | \eta_{\Lambda\setminus \Lambda_0}\xi^1_{V \setminus \Lambda})%
-\gamma_{\Lambda_0}(f | \eta_{\Lambda\setminus \Lambda_0}\xi^2_{V \setminus \Lambda})\Big),%
\end{equation}
which therefore is strictly positive, too.

\subsection{Results: Badness}
\label{subsec:results_badness}

\begin{theorem}
\label{thm:almost-sure-badness}
Let $(V\!,E)$ be the Cayley tree of order $d \geq 4$. Then there exist finite positive constants $\beta_{b}(d)>0$, $\lambda_{b}(d)>0$ such that for all $\beta>\beta_{b}(d)$ there exists a finite time $t_{b}(\beta,d)$ so that the set of bad configurations for $\mu^\#_{\beta,\lambda,t}$ has full measure for all $t \geq t_{b}(\beta,d)$ and all $\lambda \geq \lambda_{b}(d)$.
\end{theorem}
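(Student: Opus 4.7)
The plan is to combine three ingredients already highlighted in the introduction: the subtree percolation criterion for badness in Theorem \ref{thm:condition_bad_configurations}, the typicality of $s$-subtree percolation at large activity in Proposition \ref{prop:probability_subtrees}, and the zero-one law of Theorem \ref{thm:zero-one_law}. The strategy is first to exhibit a positive-measure set of bad configurations for $\mu^\#_{\beta,\lambda,t}$, and then to upgrade this to full measure via the zero-one law.

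Concretely, I would first select the smallest integer $s$ for which Theorem \ref{thm:condition_bad_configurations} guarantees badness of every $\eta \in \Omega$ that contains an infinite occupied $s$-ary subtree; the same theorem then supplies a finite threshold $\beta_b(d)$ and a time $t_b(\beta,d)$ such that, for every $\beta > \beta_b(d)$ and $t \ge t_b(\beta,d)$, every such $\eta$ is essentially-discontinuous for $\mu^\#_{\beta,\lambda,t}$ in the sense of Definition \ref{def:essential_discontinuity}. The hypothesis $d \ge 4$ enters here to accommodate such an $s$ within the branching structure of the Cayley tree. Denote by $\mathcal{B} \subset \Omega$ the tree-automorphism invariant event that some vertex of $V$ is the root of an infinite occupied $s$-ary subtree, so that $\mathcal{B}$ is contained in the set of bad configurations. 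Because the single-site dynamics \eqref{eq:single_site_kernel} preserves the occupation pattern $(|\eta_i|)_{i \in V}$ pointwise, the probability of $\mathcal{B}$ under $\mu^\#_{\beta,\lambda,t}$ agrees with its probability under $\mu^\#_{\beta,\lambda}$. Combined with tree-automorphism invariance of the intermediate measure, Proposition \ref{prop:probability_subtrees} then yields a threshold $\lambda_b(d)$ such that $\mu^\#_{\beta,\lambda}(\mathcal{B}) > 0$ for all $\lambda \ge \lambda_b(d)$.

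At this stage, the set of bad configurations for $\mu^\#_{\beta,\lambda,t}$ contains $\mathcal{B}$ and is therefore of strictly positive measure. Theorem \ref{thm:zero-one_law}, which applies to any homogeneous tree-indexed Markov chain Gibbs measure of our model and does not require tail-triviality, forces the probability of being a bad configuration to lie in $\{0,1\}$. Being positive, it equals one, which is the conclusion of Theorem \ref{thm:almost-sure-badness}, with the thresholds $\beta_b(d)$, $\lambda_b(d)$, and the time $t_b(\beta,d)$ chosen as above.

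The main obstacle, from my perspective, is not the zero-one argument or the percolation estimate separately, but their compatibility through Theorem \ref{thm:condition_bad_configurations}: the minimal degree $s$ at which the inhomogeneous two-layer recursion on an occupied $s$-ary subtree can carry a boundary condition down to its root, overpowering both the non-percolating finite appendices and the magnetic-field-like contribution from the second-layer conditioning, must simultaneously satisfy $s \le d$ and be small enough that the intermediate measure actually produces $s$-subtree percolation at some finite activity. Arranging this coupling is precisely what pins down the hypothesis $d \ge 4$, and I would expect it to constitute the main technical work underlying Theorem \ref{thm:condition_bad_configurations} rather than the short chain of implications assembled above.
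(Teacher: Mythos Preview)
Your proposal is correct and matches the paper's own proof almost verbatim: the paper combines Theorem~\ref{thm:condition_bad_configurations}, Proposition~\ref{prop:probability_subtrees}, and the zero-one law of Theorem~\ref{thm:zero-one_law} in exactly this order, with the specific choice $s=d-1$ (which satisfies both $s>(d+1)/2$ and $s\leq d-1$ precisely when $d\geq 4$). Your final paragraph correctly locates the real work in Theorem~\ref{thm:condition_bad_configurations} rather than in this short assembly.
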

The proof of the theorem relies on a general zero-one law, Theorem \ref{thm:zero-one_law}, together with two ingredients about the set of a bad configurations which are interesting in themselves. A subtree condition for bad configurations, Theorem \ref{thm:condition_bad_configurations}, and the typicality of this condition at large activities, Proposition \ref{prop:probability_subtrees}.

\subsubsection*{Zero-one law}

First we give a general zero-one theorem for the set of bad configurations on trees, for possibly non-extremal measures. Note that for extremal Gibbs measures on any countable graph, it is well known that the set of bad configurations has probability zero or one. This is clear, as the set of bad configurations form a tail-event, and extremality of a Gibbs measure implies its triviality on the tail-sigma algebra. In our case however, the intermediate measure under consideration is provably non-extremal in the interesting regime of large repulsion $\beta$, and large activity $\lambda$, as we show in Lemma \ref{lemma:non-extremality_intermediate}, and so the following theorem is necessary:
\begin{theorem}[Zero-one law]
\label{thm:zero-one_law}
Assume that $\mu$ is a tree-indexed Markov chain on the Cayley tree with state space $\{-1,0,1\}^V$, which is invariant under tree-automorphisms, and whose transition matrix $P$ has strictly positive matrix elements. Denote by $\mu_t$ the corresponding time-evolved measure with starting measure $\mu$, obtained under the spin-flip dynamics \eqref{eq:single_site_kernel}. Then, for each time $t$, the set of bad configurations $B_t$ for the time-evolved measure $\mu_{t}$ satisfies the zero-one law $\mu_t(B_t)\in \{0,1 \}$.
\end{theorem}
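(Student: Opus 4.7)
My plan is to establish the dichotomy $\mu_t(B_t)\in\{0,1\}$ by showing that $B_t$ is a tail event invariant under tree automorphisms, and then to run a renewal decomposition across the zero-sites of the configuration to reduce to an i.i.d.-type zero-one law on the resulting block structure. The delicate point, anticipated below, is that $\mu_t$ itself is not tail-trivial, so tail-measurability and tree-automorphism invariance cannot be used directly but must be combined with the renewal picture.

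First I would verify that $B_t$ lies in the tail $\sigma$-algebra $\bigcap_{\Lambda_0\Subset V}\mathcal{F}_{\comp{\Lambda_0}}$. In Definition \ref{def:essential_discontinuity} essential discontinuity at $\eta$ is witnessed by the limsup over $\Lambda\nearrow V$ of differences of conditional expectations outside a finite set $\Lambda_0$; modifying $\eta$ at finitely many sites can be absorbed by enlarging $\Lambda_0$ to contain them, so whether $\eta\in B_t$ depends only on the tail of $\eta$. Since $\mu_t$ is tree-automorphism invariant (inherited from $\mu$ and the site-symmetric kernel \eqref{eq:single_site_kernel}) and the essential-discontinuity condition is itself invariant under relabelling of vertices, $B_t$ is also invariant under the action of the tree-automorphism group.

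Next I would pass to the two-layer picture of the dynamics and exploit the fact that the spin-flip kernel \eqref{eq:single_site_kernel} keeps zeros fixed: almost surely
\begin{equation*}
C:=\{i\in V:\omega_i=0\}=\{i\in V:\eta_i=0\},
\end{equation*}
so $C$ is accessible from either layer. Under $\mu$, the Markov property on the tree together with strict positivity of $P$ turns every zero site into a renewal point: conditionally on $\omega_j=0$, the subtrees attached to $j$ are independent, and each is distributed as the measure re-started from a zero boundary at $j$. Because the dynamics acts independently across sites, this conditional independence is inherited by $\mu_t$ given $C$. Starting from an arbitrary root $o$ and exploring breadth-first, declaring a renewal whenever the explored vertex carries a zero, produces a decomposition of $\eta$ into a countable family of (finite or infinite) occupied clusters organized by a tree of renewal sites. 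By positivity of $P$ every infinite ray out of $o$ almost surely contains infinitely many renewals, and by tree-automorphism invariance of $\mu$ the rooted components hanging off distinct renewal sites are i.i.d.\ after re-rooting.

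The final step is to translate membership in $B_t$ into a statement about this renewal representation and apply a Kolmogorov-type zero-one law. Because $B_t$ is tail-measurable and tree-automorphism invariant, its truth cannot depend on any finite collection of renewal components and must be invariant under permutations of the exchangeable families of components produced by the construction; on the i.i.d.\ block structure this forces $\mu_t(B_t)\in\{0,1\}$. I expect the main obstacle to be the handling of possibly infinite clusters, since these are precisely where the non-extremality of $\mu$ (cf.\ Proposition \ref{lemma:non-extremality_intermediate}) can hide genuinely non-trivial tail information. Closing the argument requires ruling out that $B_t$ discriminates among the $\pm 1$-patterns inside such an infinite cluster; this should follow from positivity of the transition matrix along the cluster, combined with the additional invariance supplied by re-rooting the renewal construction at the next zero encountered inside the cluster — a step which is automatic almost surely by positivity of $P$ and by the tree-automorphism invariance of $\mu_t$.
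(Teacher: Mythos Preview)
Your proposal shares the paper's renewal idea---decomposing the configuration at the zero-sites into i.i.d.\ rooted occupied clusters grown outward---but diverges at the crucial closing step, and the divergence is a genuine gap.

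The paper does not appeal to a Kolmogorov or Hewitt--Savage zero-one law on the block sequence. Instead it uses a structural fact about $B_t$ that you never state: a full-tree configuration is bad if and only if at least one of its occupied connected components, extended by zeros outside, is already bad. This holds because the first-layer conditional model (cf.\ the derivation leading to \eqref{eq:hat_representation}) factorises over occupied clusters of the second-layer configuration, so essential discontinuity can only be produced inside a single such cluster. Once this is observed, one defines the single-cluster badness probability $p_{\mathrm{bad}}(s)$ for the two starting signs $s\in\{-1,+1\}$ and runs a direct dichotomy: if $p_{\mathrm{bad}}(s')>0$ for some $s'$, Borel--Cantelli on the infinitely many i.i.d.\ clusters gives $\mu_t(B_t)=1$; if both vanish, all clusters are good almost surely and $\mu_t(B_t)=0$.

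Your route---tail-measurability of $B_t$ in $\eta$ combined with exchangeability or tail-triviality of the block sequence---does not close without that componentwise characterisation. A single (possibly infinite) occupied cluster is one block in your decomposition, and membership in $B_t$ may be decided entirely by that one block; hence $B_t$ is \emph{not} in general in the tail $\sigma$-algebra of the block sequence, and a Kolmogorov-type law cannot be invoked. Your proposed rescue, ``re-rooting the renewal construction at the next zero encountered inside the cluster'', is vacuous: by definition an occupied cluster contains no zeros. Positivity of $P$ ensures zeros along any \emph{fixed} ray almost surely, but on the Cayley tree this does not preclude infinite occupied clusters---and those are precisely the carriers of badness. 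The missing ingredient is exactly the decoupling of badness over occupied components; with it, the argument reduces to Borel--Cantelli and no abstract zero-one law is needed.
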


\subsubsection*{Subtree condition on badness}

The following theorem gives a sufficient condition for bad configurations of the time-evolved measure, uniformly in the choice of signs. 
\begin{theorem}
\label{thm:condition_bad_configurations}
Let $(V\!,E)$ be the Cayley tree of order $d$ and $\eta\in\Omega$ any configuration such that the set of occupied sites $\mathcal{O}(\eta):=\{i\in V|\,|\eta_i|=1\}$ \emph{contains} a rooted tree of order $s$, where $s$ satisfies
\begin{equation}
s>\frac{d+1}{2}.%
\end{equation}
Then there exists a critical repulsion strength $\beta_{c}(d,s) \in (0,\infty)$ so that for all $\beta > \beta_{c}(d,s)$ there exists a time $t_{c}(\beta,d,s)\in (0,\infty)$ such that the time-evolved intermediate measure $\mu^{\#}_{\beta,\lambda,t}$ is essentially-discontinuous at $\eta$ for all times $t \geq t_{c}(\beta,d,s)$ and all activities $\lambda > 0$.
\end{theorem}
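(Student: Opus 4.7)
The plan is to follow the two-layer strategy sketched in the introduction: represent the finite-volume conditional probabilities $\mu^{\#}_{\beta,\lambda,t}(\,\cdot\,|\eta_{\Lambda\setminus\Lambda_0}\xi_{\Delta\setminus\Lambda})$ as integrals over the time-$0$ configuration $\omega$ against the static Widom-Rowlinson specification multiplied by the product kernel $\prod_i p_t(\omega_i,\cdot)$, and then use Zachary's Theorem \ref{thm:bl_gibbs_measure} to convert this into a recursion of effective log-ratio fields $u_{ij}$ along oriented edges. Because the dynamics \eqref{eq:single_site_kernel} preserves zeros, conditioning on any second-layer configuration freezes $\omega_i=0$ at every unoccupied site and restricts $\omega$ to $\{\pm 1\}$ on the occupied sites. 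On those sites the conditional first-layer model is of Ising type with coupling $\beta$ and an inhomogeneous magnetic field $h_i=\tfrac{1}{2}\log\bigl(p_t(+1,\cdot)/p_t(-1,\cdot)\bigr)$ whose magnitude is of order $e^{-2t}$ and depends only on the second-layer spin at $i$.

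Next I would restrict the recursion to the percolating subtree $T\subseteq\mathcal{O}(\eta)$ of order $s$ rooted at $\Lambda_0$. At each $i\in T$ the update for the effective field receives inputs of two kinds: the $s$ contributions from the children of $i$ that lie in $T$, which propagate the boundary condition through a homogeneous ferromagnetic amplification, and at most $d+1-s$ contributions coming from \emph{appendix} neighbours outside $T$. The appendices consist either of finite occupied clusters or of unoccupied sites acting as dilutions, with their far boundary fed by the intermediate boundary law; in either case the appendix recursion can be solved independently and provides a uniformly bounded perturbation of the homogeneous recursion on $T$, with a bound on its magnitude that is controlled in $\beta$ and $\lambda$, uniformly in the $\pm$-signs of $\eta$ and in the configuration on the appendix.

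The heart of the argument is then a stability statement for the resulting inhomogeneous recursion on $T$. In the regime $\beta\gg 1$ the homogeneous $s$-ary Ising recursion has two well-separated stable fixed points $\pm u^*$ with $u^*\to\infty$ and a large basin of attraction; for $t$ large the inhomogeneous field $h_i$ is small, so this structure survives. The condition $s>(d+1)/2$ enters precisely because it ensures that the $s$-fold amplification along the $T$-branches strictly dominates the collective effect of the at most $d+1-s<s$ appendix branches, no matter how the appendix configurations and the signs of $\eta$ on $T$ are chosen. Choosing two constant-sign boundary conditions $\xi^1,\xi^2$ on the $T$-sites outside of $\Lambda$ and iterating the recursion inward produces two values of the effective field at the root of $T$ that remain in disjoint neighbourhoods of $+u^*$ and $-u^*$ uniformly in $\Lambda$, so that the conditional probabilities $\mu^{\#}_{\beta,\lambda,t}(\sigma_0=+1\,|\,\eta_{\Lambda\setminus\Lambda_0}\xi^j_{\Delta\setminus\Lambda})$ differ by an amount bounded away from zero. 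With $f=\ind_{\{\sigma_0=+1\}}$ this yields essential discontinuity in the sense of Definition \ref{def:essential_discontinuity}.

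The main obstacle is Step~3: quantitatively controlling the recursion on $T$ \emph{jointly} against (i) arbitrary second-layer signs on $T$, which behave like random $\pm$-fields of order $e^{-2t}\beta$, and (ii) arbitrary configurations on the appendix subtrees, which can themselves carry boundary-law contributions trying to push the root toward the opposite fixed point. Unlike in the pure Ising analysis of \cite{van_enter_gibbs-non-gibbs_2012,bissacot_stability_2017}, the soft-core Widom-Rowlinson dilution caused by holes in the appendices makes the per-vertex perturbation genuinely inhomogeneous, and the interplay between percolating amplification and appendix counteraction has to be balanced sharply; this is exactly where the arithmetic bound $s>(d+1)/2$ becomes necessary, since it is the minimal ratio guaranteeing that the $s$ ferromagnetic branches overwhelm any coordinated adversarial use of the remaining $d+1-s$ branches.
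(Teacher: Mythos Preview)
Your plan is essentially the paper's own proof: reduce via the two-layer representation to an Ising-type boundary-field recursion on the occupied cluster, split each vertex's children into the $s$ subtree branches that propagate the boundary influence and the remaining appendix branches that may counteract it, and show that for large $\beta$ and small $h^t$ the subtree part wins.

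Two technical points you leave vague are exactly what make the argument close. First, the appendix contribution is controlled not by solving the appendix recursion but by the crude bound $|\varphi_{\beta/2}|\le \beta/2$, which holds for \emph{any} input; this is why the estimate is uniform in $\lambda$ (the recursion on occupied sites simply does not see $\lambda$, since the boundary law of the intermediate measure satisfies $l(+1)=l(-1)$ and drops out), so your phrase ``controlled in $\beta$ and $\lambda$'' should really read ``controlled in $\beta$ alone''. Second, the branch count at a non-root vertex of the subtree is $d$ children, at least $s$ in $T$ and at most $d-s$ outside; the resulting worst-case recursion is $F_k\ge s\,\varphi_{\beta/2}(F_{k+1}-h^t)-(d-s)\tfrac{\beta}{2}$, and the condition $s>(d+1)/2$ is precisely what allows the Ising critical-field estimate $h'(\beta,s)=\tfrac{s-1}{2}\beta+O(1)$ to absorb the offset $(d-s)\tfrac{\beta}{2}+h^t$ for large $\beta$ and large $t$.
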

\begin{remark}
\label{remark:non-gibbs}
Applying Theorem \ref{thm:condition_bad_configurations} for $d=s\geq 2$ we immediately obtain that the time-evolved intermediate measure is non-Gibbs at large $\beta$ for all sufficiently large times for \emph{any} $d \geq 2$. This is clear as we are provided with the bad configurations constructed from s-subtrees (which in general may however have zero measure). 
\end{remark}

\subsubsection*{Subtree percolation}  

When is the set of provably bad configurations from Theorem \ref{thm:condition_bad_configurations} typical, i.e. when is subtree-percolation ensured? The measure of occupied sites drawn from $\smash[b]{\mu^{\#}_{\beta,\lambda,t}}$ turns out to be a tree-indexed Markov chain again, see Lemma \ref{lemma:occupation_markov_chain}, with an explicit transition matrix depending on $\beta,\lambda$ but independent of $t$ -- which should not be expected from the other Gibbs measures. Hence the connected clusters of occupied sites of the intermediate measure (growing away from the origin, see proof of zero-one law) form Galton-Watson processes that do not depend on $t$.

Let $p_{s}(\beta,\lambda,d)$ denote the time-independent probability that a fixed occupied site on the Cayley tree of order $d$, whose sites are occupied according to the time-evolved intermediate measure $\mu^{\#}_{\beta,\lambda,t}$, is the root of an outward growing occupied subtree where each vertex has at least $s$ children.
\begin{proposition}
\label{prop:probability_subtrees}
For $d \geq 2$ there exists a critical activity $\lambda_{b}(d) \in (0,\infty)$ such that
\begin{equation}
p_s(\beta,\lambda,d) > 0%
\end{equation}
holds for all $\lambda \geq \lambda_{b}(d)$ uniformly for all $\beta > 0$ and any $s \leq d-1$.
\end{proposition}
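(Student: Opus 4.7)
The plan is to reduce the statement to a Galton--Watson branching problem via Lemma \ref{lemma:occupation_markov_chain} and then to show that the occupation transition probability approaches $1$ uniformly in $\beta$ when $\lambda$ is large. By that lemma, the occupation variables $(|\sigma_i|)_{i\in V}\in\{0,1\}^V$ form a tree-indexed Markov chain whose two-state transition kernel depends only on $\beta$ and $\lambda$. Let $q=q(\beta,\lambda):=P(|\sigma_j|=1\mid|\sigma_i|=1)$ be the occupied-to-occupied transition probability. By the Markov property and tree-automorphism invariance, conditional on a fixed vertex $v$ being occupied, for any outward descendant $w$ of $v$ the occupied subtree of descendants of $w$ is a Galton--Watson tree with $\mathrm{Bin}(d,q)$ offspring distribution. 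Hence the probability $\tilde p_s$ that such an occupied outward descendant itself roots an $s$-ary occupied subtree satisfies the self-consistency relation
\[
\tilde p_s = f_q(\tilde p_s), \qquad f_q(x):=P\bigl(\mathrm{Bin}(d,qx)\geq s\bigr),
\]
and is the largest fixed point of $f_q$ in $[0,1]$, obtained as $\lim_n f_q^n(1)$. Since $P(\mathrm{Bin}(d+1, q\tilde p_s)\geq s) > 0$ whenever $\tilde p_s>0$, the proposition reduces to showing $\tilde p_s > 0$.

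For $s \leq d-1$ the map $f_q$ admits a positive fixed point as soon as $q$ exceeds a threshold $q^\star(d,s)<1$. Indeed, $f_1(1)=1$ and the standard identity $\partial_p P(\mathrm{Bin}(d,p)\geq s) = d\binom{d-1}{s-1}p^{s-1}(1-p)^{d-s}$ yields $f_1'(1)=d\binom{d-1}{s-1}\cdot 0^{d-s}=0$ for $s\leq d-1$. Therefore $f_1(x)>x$ on some left-neighbourhood $(1-\delta,1)$ of $1$, and by joint continuity of $(q,x)\mapsto f_q(x)$ the strict inequality persists for $f_q$ at a fixed point $x_0\in(1-\delta,1)$ whenever $q$ is close enough to $1$. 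Combined with $f_q(1)=P(\mathrm{Bin}(d,q)\geq s)<1$, this forces a sign change of $f_q-\mathrm{id}$ on $[x_0,1]$, producing the desired positive fixed point. Setting $q^\star(d):=\max_{1\leq s\leq d-1} q^\star(d,s)<1$ yields uniformity in $s$; the case $s=1$ reduces to the classical threshold $dq>1$ and is automatically included.

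It remains to show $q(\beta,\lambda)\to 1$ uniformly in $\beta>0$ as $\lambda\to\infty$. Using the symmetric intermediate boundary law $l(\pm 1)=a$, $l(0)=1$ of Theorem \ref{thm:bl_gibbs_measure} and the transfer operator \eqref{eq:transfer_operator}, a direct computation of $P(\sigma_j=\cdot\mid\sigma_i=1)\propto Q_{\{i,j\}}(1,\cdot)\,l(\cdot)$ gives
\[
q(\beta,\lambda)=\frac{u(1+e^{-\beta})}{u(1+e^{-\beta})+1}, \qquad u:=\lambda^{1/(d+1)}a,
\]
and \eqref{eq:boundary_law_recursion} simplifies to the scalar equation $u=\lambda\,\phi(u)^d$ with $\phi(u)=(u(1+e^{-\beta})+1)/(2u+1)$. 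Since $\phi$ is strictly decreasing in $u$ while $u\mapsto u$ is increasing, this equation has a unique positive solution, which therefore determines the intermediate-measure boundary law. The elementary identity
\[
\phi(u)-\tfrac{1}{2} \;=\; \frac{2ue^{-\beta}+1}{2(2u+1)} \;>\; 0
\]
yields $\phi(u)>1/2$ for all $\beta\geq 0$ and $u\geq 0$, hence $u>\lambda/2^d$ and thus $q(\beta,\lambda)\geq \lambda/(\lambda+2^d)$ uniformly in $\beta\geq 0$. Choosing $\lambda_b(d):=2^d/(1-q^\star(d))$ ensures $q(\beta,\lambda)>q^\star(d)$ for all $\lambda\geq\lambda_b(d)$ and $\beta>0$, completing the proof. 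The main subtlety throughout is precisely this uniformity in $\beta$: the large-$\lambda$ asymptotic $u\sim\lambda\,((1+e^{-\beta})/2)^d$ degenerates as $\beta\to\infty$, so any estimate routed through the $\beta$-dependent constant $(1+e^{-\beta})/2$ would fail; the uniform bound $\phi>1/2$ above is exactly what is needed.
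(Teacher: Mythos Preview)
Your proof is correct and follows essentially the same approach as the paper's. Both arguments reduce to a Galton--Watson fixed-point equation $\tilde p_s = P(\mathrm{Bin}(d,q\tilde p_s)\geq s)$, establish a positive fixed point via $f_1(1)=1$, $f_1'(1)=0$ for $s\leq d-1$ (the paper does this only for $s=d-1$, where $f_1$ is exactly its function $g$), and then show $q(\beta,\lambda)\to 1$ uniformly in $\beta$ via the bound $\xi_{\beta,\lambda}>\lambda/2^d$; you recompute this last bound directly, whereas the paper packages it as Lemma~\ref{lemma:transition_probability}.
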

In \cite{pakes_family_1991} probabilities for occupied subtrees have already been studied, also \cite{balogh_bootstrap_2006} investigates the probability for the existence of so-called $k$-forts which implies the existence of an occupied subtree of order $d-k$. However, both of these more general works do not immediately give the bounds for our special case, thus we give a self-contained proof in Subsection \ref{subsec:subtree_percolation}.
\begin{remark}
One may ask if the full-measure badness of Theorem \ref{thm:almost-sure-badness} could persist on the ternary ($d=3$) or even on a binary tree ($d=2$), and try to improve the badness condition in Theorem \ref{thm:condition_bad_configurations}. A more refined approach for the recursion might allow better estimates by using typical influences of spins on the occupied subtree and the non-percolating branches, instead of estimating uniformly by the worst possible cases (cp. Subsection \ref{subsec:subtree_badness}). Furthermore we might obtain badness results for occupied subtrees closer to full occupation than the subtree of order $d-1$, yet still typical in a region of very large activity, thereby improving the current proof of Theorem \ref{thm:almost-sure-badness}. This remains an open problem which needs  a finer analysis.
\end{remark}

\subsection{Results: Goodness}
\label{subsec:results_goodness}

\subsubsection*{General result: short-time goodness via Dobrushin}
As we have seen the time-evolved measure $\smash[b]{\mu^{\#}_{\beta,\lambda,t}}$ is not Gibbs for large enough times $t$ and activities $\lambda$. However, there are regimes of parameters where $\smash[b]{\mu^{\#}_{\beta,\lambda,t}}$ is Gibbs or at least almost surely Gibbs. First we state that the intermediate dynamical measure satisfies the so-called short-time Gibbs property, i.e. $\mu^{\#}_{\beta,\lambda,t}$ is Gibbs for small times $t$.
\begin{theorem}
\label{thm:almost-sure-goodness_dobrushin}
For every $\beta>0$ and $\lambda>0$ there exists a time $t_g(\beta,\lambda,d)\in(0,\infty]$ such that for all $t<t_g(\beta,\lambda,d)$ the time-evolved measure $\mu^{\#}_{\beta,\lambda,t}$ is Gibbs.
\end{theorem}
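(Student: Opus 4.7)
The plan is to prove the short-time Gibbs property by applying the Dobrushin uniqueness criterion to the \emph{constrained first-layer system}, following the standard two-layer strategy of \cite{van_enter_possible_2002}. Consider the joint distribution of the initial configuration $\omega$ and the time-$t$ configuration $\eta$ under the dynamics~\eqref{eq:single_site_kernel}. Because holes are preserved, the joint measure is supported on pairs with $\mathcal{O}(\omega) = \mathcal{O}(\eta)$. Conditional on an infinite-volume configuration $\eta$ at time $t$, the first-layer configuration $\omega$ is therefore supported on $\{-1,1\}^{\mathcal{O}(\eta)}$ and distributed according to a Markov specification $\gamma^{t,\eta}$ on the tree, whose single-site conditional probabilities combine the Widom--Rowlinson repulsion $\beta \ind_{\{\omega_i \omega_j=-1\}}$ on occupied edges with an additional single-site pinning field $\log p_t(\omega_i, \eta_i)$ at each occupied site.

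The key observation is that the pinning dominates the interaction for small $t$. At any occupied site $i$ the ratio of conditional single-site probabilities satisfies
\[
\frac{\gamma^{t,\eta}_i(-\eta_i \mid \omega_{\boundary{i}})}{\gamma^{t,\eta}_i(\eta_i \mid \omega_{\boundary{i}})}
\leq \frac{p_t(-\eta_i, \eta_i)}{p_t(\eta_i, \eta_i)} \cdot e^{\beta(d+1)}
= \tanh(t) \cdot e^{\beta(d+1)},
\]
uniformly in the boundary condition $\omega_{\boundary{i}}$. A direct computation of the total-variation effect of flipping a single neighbour (which modifies the log-ratio above by at most $2\beta$) yields the single-site Dobrushin bound $C_{ij}(t) \leq K(\beta,d) \tanh(t)$, with a constant $K(\beta,d)$ independent of the conditioning $\eta$, the site $i$, and of all remaining boundary values. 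Since every vertex has at most $d+1$ neighbours, Dobrushin's criterion
\[
\sup_i \sum_{j \in \boundary{i}} C_{ij}(t) \leq (d+1)\, K(\beta,d)\, \tanh(t) < 1
\]
holds uniformly in $\eta$ for all $t < t_g(\beta, \lambda, d)$, where $t_g$ is defined by $(d+1) K \tanh(t_g) = 1$.

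Dobrushin uniqueness then implies that, for every such $t$ and every $\eta$, the specification $\gamma^{t,\eta}$ admits a unique Gibbs measure $\nu^{t,\eta}$ whose local expectations depend quasilocally on $\eta$; this quasilocality is inherited from the local $\eta$-dependence of the single-site kernels (which depend on $\eta$ only through $\eta_{\{i\}\cup\boundary{i}}$) combined with the Dobrushin-based exponential decay of correlations. A standard Bayes-formula identification then expresses the conditional expectations of local functions under $\mu^{\#}_{\beta,\lambda,t}$ as integrals of local functions against $\nu^{t,\eta}$, so they are quasilocal functions of $\eta$, and hence $\mu^{\#}_{\beta,\lambda,t}$ admits a quasilocal specification and is Gibbs.

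The main technical task is securing the Dobrushin estimate \emph{uniformly in the conditioning $\eta$}, i.e., uniformly over all occupied geometries (including the worst case where a site has its maximum of $d+1$ occupied neighbours) and all sign patterns in the neighbourhood. This uniformity is available here because the Cayley tree has bounded degree and the pair interaction is bounded in $\beta$, so that the single factor $\tanh(t)$, which can be made arbitrarily small as $t \downarrow 0$, dominates all remaining finite $\beta$- and $d$-dependent constants.
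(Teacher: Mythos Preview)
Your proposal is correct and follows essentially the same route as the paper: both arguments establish short-time Gibbsianness by verifying a Dobrushin uniqueness condition for the constrained first-layer specification, uniformly in the second-layer conditioning $\eta$, so that the standard two-layer machinery of \cite{van_enter_possible_2002} yields a quasilocal specification for $\mu^{\#}_{\beta,\lambda,t}$. The paper's own proof is little more than a pointer to \cite{kissel_dynamical_2020}, where the Dobrushin estimate and the comparison argument are carried out for the soft-core Widom--Rowlinson model on general locally finite graphs; your write-up spells out the mechanism directly (the pinning field $h^t\to\infty$ as $t\downarrow 0$ forces the single-site ratio to be $O(\tanh t)$, hence the Dobrushin matrix has row sums $O((d+1)\tanh t)$), which is exactly the content of the cited result specialised to the Cayley tree.
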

\begin{remark}
By this theorem and Remark \ref{remark:non-gibbs} we found a Gibbs-non-Gibbs transition for the time-evolved intermediate measure on the Cayley tree of order $d$ at large repulsion strength $\beta$. For all activities $\lambda$ the measure is Gibbs for small times and non-Gibbs for large times.
\end{remark}

\subsubsection*{Almost sure goodness for small density via extinction}
In contrast to Theorem \ref{thm:almost-sure-badness} the set of bad configurations has zero measure, since it is empty, for small times and every activity. In the following theorem we handle the case for small $\lambda$. Here we prove only an almost-sure result, i.e. the set of bad configurations has zero measure. The idea to rewrite the model to use extinction probabilities for Galton-Watson trees.
\begin{theorem}
\label{thm:almost-sure-goodness}
Let $\beta>0$. Then there exists $\lambda_{g}(\beta,d) \in (0,\infty)$ such that for every $\lambda < \lambda_{g}(\beta,d)$ the time-evolved measure $\mu^{\#}_{\beta,\lambda,t}$ is almost surely Gibbs for every time $t>0$.
\end{theorem}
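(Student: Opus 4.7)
The plan is to couple goodness of a configuration $\eta$ directly to the geometry of its occupied sites $\mathcal{O}(\eta)=\{i\in V:|\eta_i|=1\}$, working in the two-layer representation used throughout the paper. The joint law of the initial configuration $\omega$ and the time-$t$ configuration $\eta$ is
\begin{equation*}
\nu(d\omega,d\eta)=\mu^{\#}_{\beta,\lambda}(d\omega)\prod_{i\in V}p_t(\omega_i,d\eta_i),
\end{equation*}
and since $p_t$ preserves the occupation variable we have $\mathcal{O}(\omega)=\mathcal{O}(\eta)$. Expressing the conditional probabilities of $\mu^{\#}_{\beta,\lambda,t}$ given $\eta$ on the complement of a finite region $\Lambda_0$ in this two-layer form, the only non-trivial first-layer randomness is the choice of signs at the occupied sites, with the spin at every unoccupied vertex frozen to $0$ on both layers.

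The key structural point is that the conditional first-layer measure given $\eta$ decouples along the connected components of $\mathcal{O}(\eta)$: the pair interaction $\beta\ind_{\{\omega_i\omega_j=-1\}}$ is carried only by edges joining two occupied sites, and the time-$t$ kernels $p_t(\omega_i,\eta_i)$ act as single-site fields. Consequently, if the connected occupied component $\mathcal{C}_{\Lambda_0}(\eta)\subset\mathcal{O}(\eta)$ meeting $\Lambda_0$ is finite and contained in some finite $\Lambda$, then for every $\Delta\Subset V$ with $\Lambda\subset\Delta$ and every $\xi^1,\xi^2\in\Omega$, the conditional expectation $\mu^{\#}_{\beta,\lambda,t}(f\,|\,\eta_{\Lambda\setminus\Lambda_0}\xi^j_{\Delta\setminus\Lambda})$ coincides with a finite-volume quantity computed on $\mathcal{C}_{\Lambda_0}(\eta)$ alone, independent of $j$. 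Hence the limsup in Definition \ref{def:essential_discontinuity} vanishes at such $\eta$, making $\eta$ a good configuration; it thus suffices to show that for $\lambda$ small, $\mu^{\#}_{\beta,\lambda,t}$-almost every $\eta$ has only finite occupied components.

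For this I would invoke Lemma \ref{lemma:occupation_markov_chain}: under $\mu^{\#}_{\beta,\lambda,t}$ the occupation process $(|\eta_i|)_{i\in V}$ is itself a tree-indexed Markov chain whose transition matrix depends only on $\beta$ and $\lambda$ (not on $t$). Rooted at an arbitrary vertex $o$ and conditioned on $|\eta_o|=1$, the cluster of occupied sites grown outward is distributed as a Galton-Watson tree with offspring distribution $\mathrm{Binomial}(d,p(\beta,\lambda))$, where $p(\beta,\lambda)$ is the conditional probability that a child is occupied given its parent is. A short analysis of the intermediate boundary law (with $l(0)=1$ and $l(1)=l(-1)=L$) shows $L\to 0$, hence $p(\beta,\lambda)\to 0$, as $\lambda\to 0$. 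Choosing $\lambda_g(\beta,d)>0$ so small that $d\,p(\beta,\lambda)<1$ whenever $\lambda<\lambda_g(\beta,d)$ makes the Galton-Watson tree strictly subcritical, so the occupied cluster through $o$ is almost surely finite.

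A countable union over $o\in V$ then shows that every connected component of $\mathcal{O}(\eta)$ is finite $\mu^{\#}_{\beta,\lambda,t}$-almost surely, proving the theorem. I expect the main obstacle to be the third step: translating smallness of $\lambda$ into smallness of the transition probability $p(\beta,\lambda)$, since the latter is accessible only through the implicit boundary-law equation solved in \cite{kissel_hard-core_2019}. Because that fixed-point equation reduces to a polynomial relation in $L$ and $\lambda^{1/(d+1)}$ with $L\to 0$ as $\lambda\to 0$, the required subcriticality follows by an elementary expansion, after which extinction of the Galton-Watson process together with the decoupling afforded by unoccupied vertices finishes the argument.
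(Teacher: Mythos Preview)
Your proposal is correct and follows essentially the same route as the paper: finite occupied clusters imply goodness via the decoupling of the first-layer model across zeros, and subcriticality of the occupation Galton-Watson tree (mean offspring $d\,p(\beta,\lambda)<1$) is obtained from $p(\beta,\lambda)\to 0$ as $\lambda\to 0$. The paper packages the last step as Lemma~\ref{lemma:transition_probability} (using the explicit bound $\xi_{\beta,\lambda}\in(2^{-d}\lambda,\lambda)$ on the boundary-law solution), which is exactly the elementary expansion you anticipate, so no additional work is needed there.
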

\begin{remark}
From Theorems \ref{thm:almost-sure-badness} and \ref{thm:almost-sure-goodness} we get an $\lambda$-dependent transition for the set of bad configurations of the time-evolved intermediate measure on Cayley trees of order $d \geq 4$ from measure zero to measure one. For $\beta > \beta_{b}(d)$ and $t \geq t_{b}(\beta,d)$ the set of bad configurations has measure zero for activities $\lambda < \lambda_{g}(\beta,d)$ and measure one for large activities $\lambda \geq \lambda_{b}(d)$.
\end{remark}

\section{Proofs: Badness}
\label{sec:proofs_badness}

\subsection{Renewal subtree construction and zero-one law for bad configurations}	
\label{subsec:zero-one}

We now give the proof of the zero-one law for bad configurations.
\begin{proof}[Proof of Theorem \ref{thm:zero-one_law}]
The main idea is to describe a configuration drawn from the tree-indexed Markov chain $\mu$ in terms of i.i.d. building blocks which are given by the connected components of occupied sites, along with their signs, anchored at the site on the cluster closest to an origin. Using only the tree-indexed Markov chain property of $\mu$, the clusters will be grown by means of a recursive algorithm described below. Here the anchoring sites of the clusters will appear as \textit{so-called active sites}. They will be determined depending on the clusters which have grown in the previous steps. These anchored clusters can be viewed as geometric generalizations to the excursions of a stationary Markov chain. 

More precisely,  we start by enumerating the vertices of the tree by $\mathbb{N}\cup \{0\}$: Choose an arbitrary root of the vertex set of the tree and label it by $0$. Then label the $d+1$ sites at distance $1$ to the root by the integers $1, \dots, d+1$, in otherwise arbitrary order. Next label the sites at distance $2$ to the root by the $(d+1)d$ next integers, in otherwise arbitrary order. Next label the sites at distance $3$ to the root by the next integers, in otherwise arbitrary order. Proceed in this way for all finite distances.

\subsubsection*{Spiralling renewal}

\begin{figure}[t]
\centering%
\includegraphics[width=0.7\textwidth]{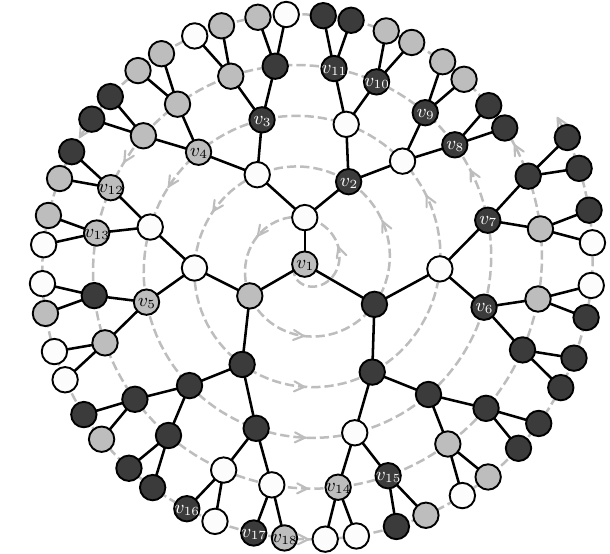}%
\caption{The vertices of a tree can be enumerated along a spiral emanating from an arbitrary root index. The occupied vertices, here coloured in grey and black for plus and minus spins, can be seen as active sites growing an outward pointing (away from the root) tree with stopping symbol $0$ provided they are not the child of another occupied vertex. Enumerating these active sites along the spiral we obtain an almost surely infinite sequence of active sites $(v_i)_{i \in \mathbb{N}}$. The partial trees growing from these active sites are i.i.d..}%
\label{fig:spiralling}%
\end{figure}

We construct a configuration of $\mu$ according to the following spiralling renewal-construction, based on the tree-indexed Markov chain property:

\emph{Step 0 -- Initialisation:} Choose the value $\sigma_0$ at the root according to the single-site distribution $\rho$, which is the invariant distribution for the transition matrix $P$ of the Markov-chain Gibbs measure, i.e. it satisfies $\rho=\rho P$. Note that our assumption on $P$ implies strict positivity of the entries of $\rho$. In the case $\sigma_0=s$ where $s\in \{-1,1\}$, choose the active site as $v=0$ and turn to the next step, \emph{Step 1}. In the case $\sigma_0=0$ choose the value of $\sigma_1=s_1$ with probability $P(0,s_1)>0$. In the case $\sigma_1\neq 0$, call $1$ the active site and turn to the next step. Otherwise carry on this procedure, i.e. continue according to the enumeration of sites in this way until the first vertex $v$ is reached for which $\sigma_v \neq 0$. Call this vertex $v$ the \emph{active site} for \emph{Step 1}. 

\emph{Step 1 -- Grow partial trees from an active site $v$, with stopping symbol $0$:} In the case $\sigma_v=s$ where $s\in \{-1,1\}$, grow random partial configurations $\sigma^s_{V_v}$ on the random subset $V_v$, which contains $v$ and points to the outside, in the following way. Here $V_v$ by definition should contain the active site $v$ as its smallest site according to the enumeration. Apply the transition matrix $P$, from inside to outside, away from the root, starting with initial condition $\sigma_v=s$. Stop to grow the branches to the outside when a first zero appears in the branch, and keep these first zeros together with the configurations of zeros and signs obtained so far. This has filled a (finite or infinite) part $V_v$ of the tree emerging from site $v$, where the value $0$ plays the role of a stopping symbol which marks the boundary of the connected component.

We note that for $v \neq 0$ the distribution of $\sigma^s_{V_v}$ is tree-invariant, and describes the connected component attached at the site $v$. For the particular case $v=0$, maximally $d+1$ such components come together to form the component at the origin.

\emph{Step 2 -- Filling more zeros to get to the next active site:} Determine the smallest site $z$ according to the spiralling enumeration whose spin value has not yet been determined. By construction the spin value of the parent site of $z$ on the tree was already determined to be zero. Choose the value $\sigma_z=s_z$ with probability according to $P(0,s_z)$. In the case $\sigma_z=0$ repeat \emph{Step 2}. In the case $\sigma_z=s$ for $s\in \{-1,1\}$ go to \emph{Step 1} with new active site $z$.

This procedure produces a sequence of active sites $(v_i)_{i\in \mathbb{N}}$, as shown in Figure \ref{fig:spiralling}, along with their signs $s_i$. As the transition matrix $P$ has strictly positive matrix elements, this sequence is almost surely infinite. We then obtain a configuration $\sigma_V$ on the full vertex set of the tree which is distributed according to $\mu$ as the concatenation of $(\sigma^{s_i}_{V_{v_i}})_{i\in \mathbb{N}} $ with the empty configuration $0$ on the remaining sites $(\bigcup_{i\in \mathbb{N}}V_{v_i})^c$. We note that (up to the component of the root) the components $\sigma^{s_i}_{V_{v_i}}$ are (tree-isomorphic to) i.i.d. random objects. \\

Now we come to the proof of the zero-one property of bad configurations. A configuration $\sigma_V$ on the full tree is a bad configuration if and only if there is at least one connected component of its occupied sites which acts as a bad configuration (when the configuration is continued by zero outside). This follows as the first-layer model decouples over the connected components of occupied sites in the conditioning. 

We can therefore check badness of the time-evolved measure on the connected components grown from $v_i\neq 0$ with sign $s\in \{-1,1\}$. We note for this purpose that the probabilities
\begin{equation}
p_{bad}(s):=\bar\mu(\sigma^s_{V_{v_i}}0_{(V_{v_i})^c} \text{ is bad at time } t\,|\,s_i=s)%
\end{equation}
do not depend on $i$ for $i \geq 2$, where the measure $\bar \mu$ is obtained as the representation of the measure $\mu$ via the above renewal construction, together with the semi-group of the time evolution. This is clear by the invariance of the construction of $V_{v_i}$, and the tree-automorphism invariance of the property to be a bad configuration.

\emph{Case 1:} $p_{bad}(s')>0$ for at least one spin value $s'\in \{-1,1\}$. As the sequence of active sites takes the value $v_i=s'$ for infinitely many $i \geq 2$ with probability one, we conclude that $\mu$-almost surely there are even infinitely many connected components labelled by $i \geq 2$ which are bad. This follows by the Borel-Cantelli Lemma applied to the situation of independently many trials with positive probability.  In particular we have $\mu_t(B_t)=1$ for the set of bad configurations $B_t$.

\emph{Case 2:} $p_{bad}(1)=p_{bad}(-1)=0$. Then all the components for $i \geq 2$ carry good configurations almost surely. We remark that also for the first component we have that $\bar\mu(\sigma^s_{V_{v_1}}0_{(V_{v_1})^c} \text{ is bad at time } t|s_1=s)=0$, for both values of $s$, if all connected components away from the origin are good. To see this, condition on the event that $v_1=0$ is in fact the origin, and employ a small Gibbsian computation which shows that glueing of finitely many good configurations on connected components preserves the property to be a good configuration. Hence it follows $\mu_t(B_t)=0$.
\end{proof}

\subsection{Subtree condition on badness}
\label{subsec:subtree_badness}

First, we introduce a sufficient criterion for essential discontinuity of the time-evolved intermediate measure. We treat the time evolution kernels as additional fields on the first-layer model, the percolation of information in the time-evolved model can then be expressed as a recursion depending on these fields.
\begin{lemma}
\label{lemma:essential_discontinuity_bf}
The measure $\mu^{\#}_{\beta,\lambda,t}$ is essentially-discontinuous at configuration $\eta\in\Omega$ if there exists a vertex $0 \in V$, an $\epsilon > 0$, and cofinal sequences $(\Lambda_n)_{n \in \mathbb{N}} \nearrow V$, $(\Delta_n)_{n \in \mathbb{N}} \nearrow V$ with $0 \in \Lambda_n \subset \Delta_n \Subset V$ such that
\begin{equation}
\label{eq:essential_discontinuity_bf}
\lim_{n \rightarrow \infty}\left|f_{k0}[\eta_{\Lambda_n\setminus 0}+_{\Delta_n\setminus\Lambda_n}]%
- f_{k0}[\eta_{\Lambda_n\setminus 0}-_{\Delta_n\setminus\Lambda_n}]\right| \geq \epsilon,%
\end{equation}
for at least one \emph{occupied} vertex $k \in \boundary 0$. The $f_{ij}[\xi_{A \setminus 0}]$ are \emph{boundary fields} depending on the configuration $\xi$ in $A \setminus 0$ calculated for edges pointing towards $0$ through the recursion
\begin{equation}
\label{eq:boundary_field_recursion}
f_{ij}[\xi_{A\setminus 0}]%
= \sum_{\substack{k \in \boundary{i}\setminus j \\ |\xi_k| = 1}}%
\varphi_{\beta/2}(f_{ki}[\xi_{A\setminus 0}]+h^t\xi_k)
\end{equation}
with
\begin{align}
\label{eq:definition_phi}
\varphi_{\beta}(x) &:= \frac{1}{2}\log\left(%
\frac{\cosh\left(x+\beta\right)}{\cosh\left(x-\beta\right)}\right) \\%
\label{eq:time-factor}
h^t &:= \frac{1}{2}\log\frac{1+\euler^{-2t}}{1-\euler^{-2t}}%
\end{align}
and homogeneous starting values $f_{ij}[\xi_{A \setminus 0}]=0$ at the boundary of $A$, i.e. for all edges $\textedge{ij}$ pointing towards $0$ with $i \in \boundary{A}$, independent of the configuration $\xi$.
\end{lemma}
Using the definition of essential discontinuity \eqref{eq:essential_discontinuity}, this lemma will be proved by representing the single-site probabilities of the time-evolved measure $\mu^{\#}_{\beta,\lambda,t}$ conditioned on a finite neighbourhood as a sum over compatible first-layer configurations, i.e. those configurations at time $t=0$ which could eventually evolve to the second-layer configuration in the conditioning. These first-layer configurations have a closed representation through boundary laws, as the initial measure $\mu^{\#}_{\beta,\lambda,0}$ at time $t=0$ is a tree-indexed Markov chain. Each summand is weighted by a modified Hamiltonian that includes field-like terms originating from the time evolution. Executing the sum then leads to the recursion relation above and a representation of the conditioned single-site probabilities through a first-layer Hamiltonian with the additional boundary fields that globally depend on the second-layer configuration. Therefore a discontinuity in these fields translates to essential discontinuity of the measure.
\begin{proof}[Proof of Lemma \ref{lemma:essential_discontinuity_bf}]
First note that we can naturally write the conditional probability of the time-evolved intermediate measure $\mu^{\#}_{\beta,\lambda,t}$ at a fixed but arbitrary root index $0$ conditioned on a second-layer spin configuration $\eta$ in a finite neighbourhood $\Lambda\setminus 0$ as
\begin{equation}
\label{eq:second_layer_representation}
\mu^{\#}_{\beta,\lambda,t}(\sigma_0=\eta_0|\eta_{\Lambda\setminus 0})%
= \int \hat{\mu}^{\#}_{\beta,\lambda,t}[\eta_{\Lambda\setminus 0}]%
(d\omega_0)p_t(\omega_0,\eta_0),%
\end{equation}
where $\hat{\mu}^{\#}_{\beta,\lambda,t}$ is the probability of the first-layer spin value at $0$ conditioned on the given second-layer configuration $\eta$, which by \eqref{eq:local_expectation} has the representation
\begin{equation}
\label{eq:second_layer_cond_probability}
\hat{\mu}^{\#}_{\beta,\lambda,t}[\eta_{\Lambda\setminus 0}](\omega_0)%
:= \frac{1}{Z_{\beta,\lambda,t}[\eta_{\Lambda\setminus 0}]}%
\sum_{\omega'_{\Lambda\setminus 0}\in \Omega_{\Lambda\setminus 0}}%
\mu^{\#}_{\beta,\lambda,0}\big(\sigma_{\Lambda}=\omega_0\omega'_{\Lambda\setminus 0}\big)%
\prod_{i\in\Lambda\setminus 0}p_t(\omega'_i,\eta_i).%
\end{equation}
Here $\mu^{\#}_{\beta,\lambda,0}$ denotes the intermediate measure at time $t=0$ and $Z_{\beta,\lambda,t}[\eta_{\Lambda\setminus 0}]$ is a suitable normalisation -- we will use this notation for normalisations of different expressions without further apology.

Since $p_t$ interpreted as a matrix is bijective, from Equation \eqref{eq:second_layer_representation} follows, that we can infer essential discontinuity of $\mu^{\#}_{\beta,\lambda,t}$ by proving that the family of measures $\big(\hat{\mu}^{\#}_{\beta,\lambda,t}[\eta_{\Lambda\setminus 0}]\big)_{\eta\in\Omega,\Lambda\Subset\Omega}$ fulfils the condition
\begin{equation}
\label{eq:essential_discontinuity_hash}
\lim_{n \rightarrow \infty}%
\Big| \hat{\mu}^{\#}_{\beta,\lambda,t}[\eta_{\Lambda_n\setminus 0}+_{\Delta_n\setminus\Lambda_n}](\omega_0)%
- \hat{\mu}^{\#}_{\beta,\lambda,t}[\eta_{\Lambda_n\setminus 0}-_{\Delta_n\setminus\Lambda_n}](\omega_0)\Big|%
\geq \epsilon'%
\end{equation}
for a fixed $\epsilon' > 0$ and cofinal sequences $(\Lambda_n)_{n \in \mathbb{N}} \nearrow V$, $(\Delta_n)_{n \in \mathbb{N}} \nearrow V$ with $0 \in \Lambda_n \subset \Delta_n \Subset V$ and some $\omega_0 \in S$. Here we have chosen $\ind_{\{\omega_0\}}$ as local function and $+,\,-$ denote the fixed configurations that are plus and minus everywhere on the whole tree respectively.

To rewrite $\mu^{\#}_{\beta,\lambda,t}$ in terms of boundary fields we first note that the single-site time evolution \eqref{eq:single_site_kernel} can be rewritten in an exponential form
\begin{equation}
p_t(\omega_i,\eta_i) = c^t(\omega_i,\eta_i)\exp\big(h^t\omega_i\eta_i\big)%
\end{equation}
using $h^t$ defined in \eqref{eq:time-factor} and
\begin{equation}
c^t(\omega_i,\eta_i)%
= \left\{%
\begin{array}{ccc}
\frac{1}{2}\big(1-\euler^{-4t}\big)^{\frac{1}{2}} && \mathrm{if}\ |\omega_i| = |\eta_i| = 1 \\%
1 && \mathrm{if}\ |\omega_i| = |\eta_i| = 0 \\%
0 && \mathrm{else}%
\end{array}\right..%
\end{equation}
This time evolution prohibits first-layer configurations in the sum of Equation \eqref{eq:second_layer_cond_probability} whose set of occupied sites differs from that of the prescribed second-layer configuration $\eta_{\Lambda\setminus 0}$. We therefore introduce for each second-layer configuration $\eta$ the space of its \emph{compatible configurations} in the first-layer $\Omega^{\eta}$ by
\begin{equation}
\Omega^{\eta} := \big\{\omega\in\Omega\,\big|\,|\omega_i|=|\eta_i| \quad \forall i\in V\big\}%
\end{equation}
and can restrict the sum to finite-volume configurations of $\Omega^{\eta}$. Replacing the single-site kernels through their exponential notation then leads to
\begin{equation}
\label{eq:muhat_first_layer}
\hat{\mu}^{\#}_{\beta,\lambda,t}[\eta_{\Lambda\setminus 0}](\omega_0)%
= \frac{1}{Z_{\beta,\lambda,t}[\eta_{\Lambda\setminus 0}]}%
\sum_{\omega'_{\Lambda\setminus 0}\in \Omega^{\eta}_{\Lambda\setminus 0}}%
\mu^{\#}_{\beta,\lambda,0}\big(\sigma_{\Lambda}=\omega_0\omega'_{\Lambda\setminus 0}\big)%
\,\prod_{\mathclap{i\in\Lambda\setminus 0}}\,%
\exp\big(h^t\omega'_i\eta_i\big).%
\end{equation}
Here we were allowed to include the factors $c^t$ of the time evolution in the normalisation as they are identical for all compatible configurations in $\smash[b]{\Omega^{\eta}_{\Lambda\setminus 0}}$.

Since the initial measure $\mu^{\#}_{\beta,\lambda,0}$ is a tree-indexed Markov chain, it has a representation through boundary laws (cp. Theorem \ref{thm:bl_gibbs_measure})
\begin{equation}
\label{eq:boundary_law_representation_hash}
\mu^{\#}_{\beta,\lambda,0}(\sigma_{\Lambda\cup\boundary{\Lambda}}=\omega_{\Lambda\cup\boundary{\Lambda}})%
= \frac{1}{Z''_{\Lambda;\beta,\lambda}}%
\prod_{\substack{\{i,j\} \in E \\ \{i,j\}\cap\Lambda\neq\emptyset}}%
Q_{\{i,j\}}(\omega_i,\omega_j)%
\prod_{k\in\boundary{\Lambda}}l_{kk_{\Lambda}}(\omega_k).%
\end{equation}
Replacing $\mu^{\#}_{\beta,\lambda,0}$ in \eqref{eq:muhat_first_layer} through its boundary law representation \eqref{eq:boundary_law_representation_hash} yields an expression that just consists of functions for spin-values of at most two neighbouring vertices. As trees do not contain loops this means that the sum over configurations on $\Lambda$ can be split into multiple sums over configurations on disjoint connected components of $\Lambda$, which just communicate through one unique path. If this path contains a vertex with a fixed spin-value, as is the case for every compatible first-layer configuration if the second-layer configuration $\eta$ has spin-value zero at one of the paths sites, these components are fully independent. In particular this implies that Equation \eqref{eq:muhat_first_layer} only depends on spin-values of sites which have a direct connection to the root through a path that is occupied in the second-layer configuration $\eta$. We therefore define the set of vertices connected to the root with regard to occupation in $\eta$
\begin{equation}
\mathcal{C}^{\eta}(A) := \big\{i \in A\,\big|\, |\eta_j|=1 \quad \forall j\in\mathcal{P}(0,i)\big\},%
\end{equation}
and can restrict Equation \eqref{eq:muhat_first_layer} to configurations on this connected component
\begin{equation}
\label{eq:muhat_intermission}
\hat{\mu}^{\#}_{\beta,\lambda,t}[\eta_{\Lambda\setminus 0}](\omega_0)%
= \begin{multlined}[t]%
\frac{1}{Z_{\beta,\lambda,t}[\eta_{\Lambda\setminus 0}]}%
\sum_{\omega_{\mathcal{C}^{\eta}(\Lambda\setminus 0)}\in \Omega_{\mathcal{C}^{\eta}(\Lambda\setminus 0)}}%
\prod_{\substack{\{i,j\} \in E \\ \{i,j\} \subset \mathcal{C}^{\eta}(\Lambda)}}%
Q_{\{i,j\}}(\omega_i,\omega_j) \\%
\times\prod_{k\in \mathcal{C}^{\eta}(\boundary{\Lambda})} l_{kk_{\Lambda}}(\omega_k)%
\prod_{l\in \mathcal{C}^{\eta}(\Lambda\setminus 0)}\exp\big(h^t\omega_l\eta_l\big).%
\end{multlined}%
\end{equation}
Except for the root, all sites in Equation \eqref{eq:muhat_intermission} have spins that are guaranteed to be non-zero. As the boundary laws of the intermediate measure $\mu^{\#}_{\beta,\lambda,0}$ are identical for occupied vertices they can be included in the normalisation resulting in
\begin{equation}
\label{eq_muhat_intermission_two}
\begin{multlined}[t]%
\hat{\mu}^{\#}_{\beta,\lambda,t}[\eta_{\Lambda\setminus 0}](\omega_0)%
= \frac{1}{Z_{\beta,\lambda,t}[\eta_{\Lambda\setminus 0}]} \\%
\times\sum_{\omega_{\mathcal{C}^{\eta}(\Lambda\setminus 0)}\in\{-1,1\}^{\mathcal{C}^{\eta}(\Lambda\setminus 0)}}%
\prod_{\substack{\{i,j\} \in E \\ \{i,j\}\subset \mathcal{C}^{\eta}(\Lambda)}}%
Q_{\{i,j\}}(\omega_i,\omega_j)%
\exp\Bigg(\sum_{k\in \mathcal{C}^{\eta}(\Lambda \setminus 0)}h^t\eta_k\omega_k%
\Bigg).\end{multlined}%
\end{equation}
Executing the summation over the first-layer spin values in \eqref{eq_muhat_intermission_two} can be done successively for each site, beginning at the boundary and working inwards to the root. By defining \emph{boundary fields} via the recursion
\begin{equation}
\label{eq:boundary_field_recursion_2}
f_{ij}[\eta_{\Lambda\setminus 0}]%
:= \sum_{\substack{k \in \boundary{i}\setminus j \\ |\eta_k| = 1}}%
\varphi_{\beta/2}(f_{ki}[\eta_{\Lambda\setminus 0}]+h^t\eta_k),%
\end{equation}
where $\varphi_{\beta}$ has already been defined in \eqref{eq:definition_phi}, with homogeneous starting values $f_{ij}[\eta_{\Lambda\setminus 0}] = 0$ for all $i \in \boundary{\Lambda}$, we eventually get the representation
\begin{equation}
\label{eq:hat_representation}
\begin{multlined}[t]
\hat{\mu}^{\#}_{\beta,\lambda,t}[\eta_{\Lambda\setminus 0}](\omega_0)%
= \frac{\lambda^{|\omega_0|}}{Z_{\beta,\lambda,t}[\eta_{\Lambda\setminus 0}]} \\%
\times\sum_{\omega_{\mathcal{C}^{\eta}(\boundary{0})}\in\{-1,1\}^{\mathcal{C}^{\eta}(\boundary{0})}}%
\exp\Bigg(%
\sum_{\substack{k\in \boundary{0} \\ |\eta_k| = 1}}\Big(%
\beta\ind_{\{\omega_k\omega_0=-1\}}%
+ h^t\eta_k\omega_k%
+ f_{k0}[\eta_{\Lambda\setminus 0}]\omega_k%
\Big)\Bigg).%
\end{multlined}
\end{equation}
We note that the activity $\lambda$ plays no part in the recursion process, as all spins at vertices on $\mathcal{C}^{\eta}(\Lambda \setminus 0)$ are occupied, which allows us to incorporate the $\lambda$-dependent parts of the transfer operators in the normalisation.

The only components of Equation \eqref{eq:hat_representation} which are dependent on the global behaviour of $\eta$ are the boundary fields $f_{k0}[\eta_{\Lambda\setminus 0}]$ calculated through the $\eta$-dependent recursion \eqref{eq:boundary_field_recursion_2}. To show that Inequality \eqref{eq:essential_discontinuity_hash} holds, it suffices to look at the difference between the boundary fields for both expressions and show
\begin{equation}
\label{eq:essential_discontinuity_sum}
\lim_{n \rightarrow \infty}%
\bigg|\sum_{\substack{k\in \boundary{0} \\ |\eta_k| = 1}}%
f_{k0}[\eta_{\Lambda_n\setminus 0}+_{\Delta_n\setminus\Lambda_n}]%
- \sum_{\substack{k\in \boundary{0} \\ |\eta_k| = 1}}%
f_{k0}[\eta_{\Lambda_n\setminus 0}-_{\Delta_n\setminus\Lambda_n}]\bigg| \geq \epsilon''%
\end{equation}
for some $\epsilon''>0$. This can be seen by an additional recursion step which results in
\begin{equation}
\hat{\mu}^{\#}_{\beta,\lambda,t}[\eta_{\Lambda \setminus 0}](\omega_0)%
= \frac{\lambda^{|\omega_0|}}{Z_{\beta,\lambda,t}[\eta_{\Lambda \setminus 0}]}%
\exp\bigg(\frac{\omega_0}{|\mathcal{C}^{\eta}(\boundary{0})|}%
\sum_{\substack{k\in \boundary{0} \\ |\eta_k| = 1}}%
f_{0k}[\eta_{\Lambda \setminus 0}]\bigg)%
\end{equation}
with new boundary fields $f_{0k}$ pointing away from the origin that due to the strict monotonicity of \eqref{eq:boundary_field_recursion_2} retain condition \eqref{eq:essential_discontinuity_sum}. The benefit of this representation lies in the fact that the influence of the boundary fields is more immediate. To see that jumps in the boundary fields carry over to jumps in the probabilities compare now the fraction
\begin{equation}
\frac{\hat{\mu}^{\#}_{\beta,\lambda,t}[\eta_{\Lambda_n\setminus 0}+_{\Delta_n\setminus\Lambda_n}](+1)}%
{\hat{\mu}^{\#}_{\beta,\lambda,t}[\eta_{\Lambda_n\setminus 0}+_{\Delta_n\setminus\Lambda_n}](-1)}%
\end{equation}
to the corresponding fraction for $\eta_{\Lambda_n\setminus 0}-_{\Delta_n\setminus\Lambda_n}$. As the recursion \eqref{eq:boundary_field_recursion_2} also preserves monotonicity in the configurations we get $f_{ij}[\eta_{\Lambda_n\setminus 0}+_{\Delta_n\setminus\Lambda_n}] \geq f_{ij}[\eta_{\Lambda_n\setminus 0}-_{\Delta_n\setminus\Lambda_n}]$ for each edge $\textedge{ij}\in\textedge{E}$. To show \eqref{eq:essential_discontinuity_sum} it is therefore sufficient to find just a single occupied vertex $k \in \boundary{0}$ such that $f_{k0}[\eta_{\Lambda_n\setminus 0}+_{\Delta_n\setminus\Lambda_n}] > f_{k0}[\eta_{\Lambda_n\setminus 0}-_{\Delta_n\setminus\Lambda_n}]$. This concludes the proof of the lemma.
\end{proof}
Having developed the essential discontinuity criterion above, let us now show that it is fulfilled for large repulsion $\beta$ at large times, if the set of occupied sites $\mathcal{O}(\eta) = \{i\in V|\,|\eta_i|=1\}$ of the configuration $\eta$ \emph{contains} a rooted subtree $S$ with $s$ children where $s$ satisfies
\begin{equation}
\label{eq:subtree_size_condition}
s > \frac{d+1}{2}.%
\end{equation}
The main idea of the following proof is depicted in Figure \ref{fig:bf_recursion}.
\begin{figure}[p]
\centering%
\includegraphics[width=0.9\textwidth]{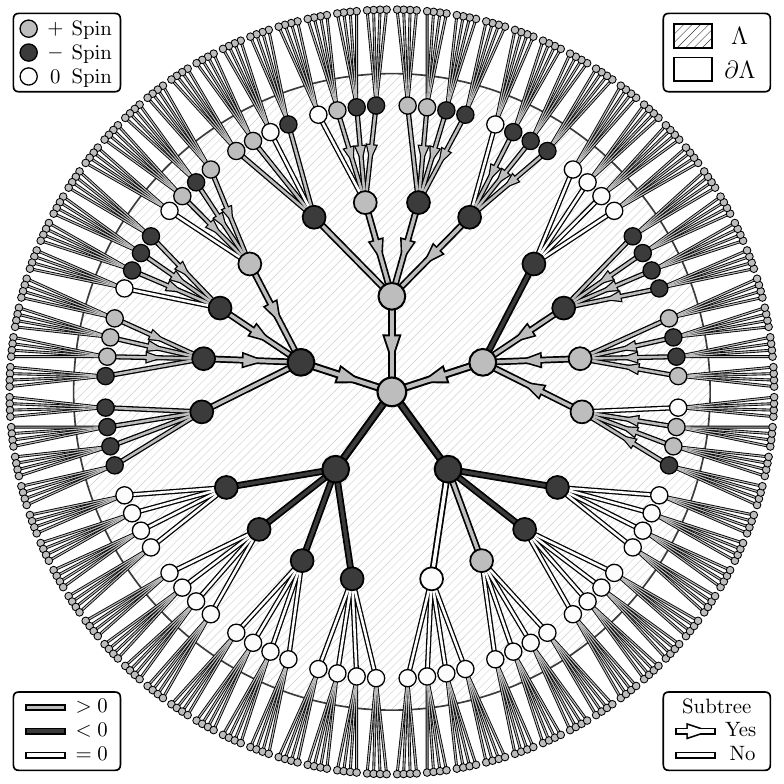}%
\caption{Simulation of the boundary field recursion on the Cayley tree of order $d=4$ at repulsion strength $\beta=2.0$ and time $t=0.2$. Vertices are coloured according to their second-layer spin values, here the configuration on $\Lambda$ is such that the centre vertex is the root of an occupied subtree with $s=3$ children. Each inward pointing edge ${ij}$ is coloured according to the value of the boundary field $f_{ij}$ plus the time-dependent field $h^t\eta_i$. The boundary field value at $\boundary{\Lambda}$ is the fixed point solution $F'>0$ of the recursion for an all plus configuration on $\Delta\setminus\Lambda$. The boundary field values on the inner rings are subsequently calculated from the recursion \eqref{eq:boundary_field_recursion} with boundary fields from unoccupied sites set to zero as they do not influence the recursion. \\ The positive starting value, combined with a large number of occupied spins due to the subtree structure, guarantees that the boundary field values along the edges of the subtree stay positive until they reach the centre vertex. This is true even for configurations where the subtree is completely occupied by minus spins. For the Ising-like case where a sub-Cayley-tree is occupied, $s=2$ children suffice for this result, regardless of the size of the main tree. In the Widom-Rowlinson model however, we might get spin clusters outside of this infinite subtree that are disconnected from the positive influences at $\boundary{\Lambda}$, like those depicted in the lower half, which can develop negative boundary field values. These need to be compensated through an in comparison sufficiently large structure of occupied spins percolating the positive boundary fields, leading to condition \eqref{eq:subtree_size_condition} for subtree percolation.}%
\label{fig:bf_recursion}%
\end{figure}
\begin{proof}[Proof of Theorem \ref{thm:condition_bad_configurations}]
\label{proof:condition_bad_configurations}
Using Lemma \ref{lemma:essential_discontinuity_bf} we choose $0$ to be the root of the occupied subtree $S$ and take $\Lambda_n=D_n,\,\Delta_n=D_{n+m}$, where $D_k:=\{i \in V|d(i,0)\leq k\}$ is the disc with radius $k$ around $0$. The value of $m \in \mathbb{N} $ will be chosen later in the proof. Using the recursion \eqref{eq:boundary_field_recursion} we can then proceed to calculate the boundary fields on the annuli $R_k:=\boundary{D_{k-1}}$ for $k \in \{1,\ldots,n+m+1\}$ (beginning at $R_{n+m+1}$ working inwards toward $\boundary{0}=R_1$).

We will do so for the plus configuration on $\Delta\setminus\Lambda$, the proof for the minus condition works analogously. Since $+_{\Delta\setminus\Lambda}$ is homogeneous, the boundary fields on each ring $R_k \subset \Delta\setminus\Lambda$ are also homogeneous. We denote these homogeneous values by $F_k$ and it holds
\begin{equation}
f_{ij}[\eta_{\Lambda\setminus 0}+_{\Delta\setminus\Lambda}] = F_k%
\qquad \forall\ \edge{ij}\ \mathrm{with}\ i\in R_k, j\in R_{k-1}.%
\end{equation}
From the boundary field recursion \eqref{eq:boundary_field_recursion} we get
\begin{equation}
\label{eq:recursion_delta}
F_k = d\varphi_{\beta/2}(F_{k+1} + h^t)%
\qquad \forall k \in \{n+1,\ldots,n+m\}%
\end{equation}
with starting value $F_{n+m+1}=0$. The function
\begin{equation}
\label{eq:recursion_function_delta}
x \mapsto d\varphi_{\beta/2}(x + h^t),%
\end{equation}
always has exactly one positive attractive fixed point $F'>0$ and we can ensure that $F_{n+1}$ is arbitrarily close to $F'$ by choosing an appropriately large $m \in \mathbb{N}$.

$F_{n+1}$ then provides the initial condition for the recursion on $\Lambda$. As the recursion on $\Lambda$ is $\eta$-dependent, we will estimate a lower bound for the boundary fields on each ring $R_k \subset \Lambda$ for the edges that are part of the subtree $S$
\begin{equation}
F_k := \min_{\substack{ij \in \edge{E},\,i,j\in S \\ i \in R_k,\, j \in R_{k-1}}}%
f_{ij}[\eta_{\Lambda\setminus 0}+_{\Delta\setminus\Lambda}]%
\qquad \forall k \in \{1,\ldots,n\},%
\end{equation}
and show that this lower bound stays positive up to the root vertex $0$. Applying the boundary field recursion \eqref{eq:boundary_field_recursion} to the $f_{ij}$ in $F_k$ gives
\begin{equation}
F_k = \min_{\substack{ij \in \edge{E},\,i,j\in S \\ i \in R_k,\, j \in R_{k-1}}}%
\sum_{\substack{l \in \boundary{i}\setminus j \\ |\eta_l| = 1}}\varphi_{\beta/2}%
\big(f_{li}[\eta_{\Lambda\setminus 0}+_{\Delta\setminus\Lambda}] + h^t\eta_l\big).%
\end{equation}
We can split the sum in terms where the vertex $l \in R_{k+1}$ is part of the subtree $S$ and those where $l$ is not contained in $S$. The boundary fields of the former terms, of which there are at least $s$, can be lower-bounded by $F_{k+1}$. Estimating $\eta_l \in \{-1,1\}$ by $-1$ and using the monotonicity of $\varphi$ then gives a lower bound for the terms on the subtree. The terms for $l \notin S$, of which there are at most $(d-s)$ can be lower-bounded by $-\beta/2$ as $|\varphi_{\beta/2}|$ is bounded by $\beta/2$. Thus we get the estimation
\begin{equation}
\label{eq:lower_bound_bf}
\begin{split}
F_k \geq& \min_{\substack{ij \in \edge{E},\,i,j\in S \\ i \in R_k,\, j \in R_{k-1}}}\bigg(%
\sum_{\substack{l \in \boundary{i}\setminus j \\ |\eta_l| = 1,\,l \in S}}%
\varphi_{\beta/2}\big(F_{k+1} - h^t\big)%
- \sum_{\substack{l \in \boundary{i}\setminus j \\ |\eta_l| = 1,\,l \notin S}}%
\frac{\beta}{2}%
\bigg)\\%
\geq& s\varphi_{\beta/2}\big(F_{k+1}%
- h^t\big) - (d-s)\frac{\beta}{2},%
\end{split}
\end{equation}
which holds for all $k \in \{1,\ldots,n\}$. A positive fixed point solution $F^+>0$ of the function
\begin{equation}
\label{eq:recursion_function_lambda}
x \mapsto s\varphi_{\beta/2}(x-h^t) - (d-s)\frac{\beta}{2},%
\end{equation}
would have to be smaller than the fixed point $F'>0$ of the outer recursion, which can be seen through direct comparison of the functions \eqref{eq:recursion_function_delta} and \eqref{eq:recursion_function_lambda}. Provided such a solution exists, we could fix $m$ to be large enough such that $F^+<F_{n+1}$. This would inductively imply
\begin{equation}
F_k%
\stackrel{\eqref{eq:lower_bound_bf}}{\geq}%
s\varphi_{\beta/2}(F_{k+1}-h^t) - (d-s)\frac{\beta}{2}%
\geq s\varphi_{\beta/2}(F^+-h^t) - (d-s)\frac{\beta}{2}%
= F^+ > 0%
\end{equation}
for all $k\in\{1,\ldots,n\}$. Here we used the monotonicity of $\varphi_{\beta/2}$ in the second step. In particular we would get $f_{j0}[\eta_{\Lambda\setminus 0}+_{\Delta\setminus\Lambda}] \geq F^+>0$ for all $j \in \boundary{0} \cap S$ and arbitrarily large $n \in \mathbb{N}$.

\begin{figure}[ht]
\centering%
\includegraphics[width=0.9\textwidth]{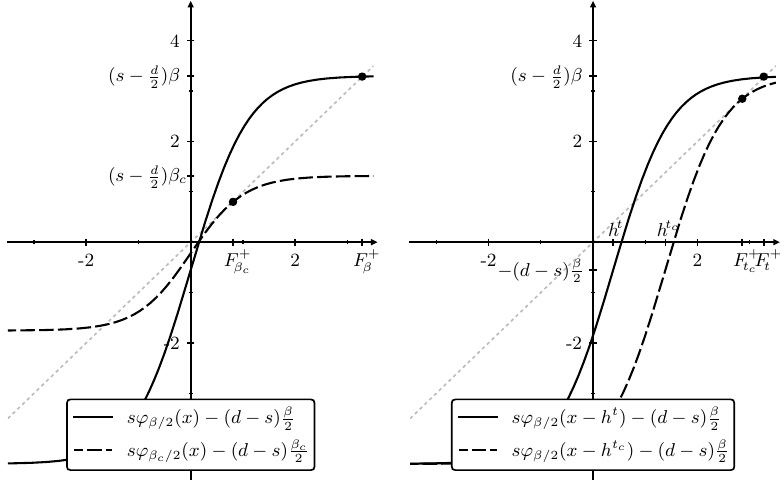}%
\caption{Solutions for the recursion \eqref{eq:recursion_function_lambda} at $\beta=1.1$ $s=7$, $d=8$ and $t=0.5$. The left side depicts the limiting case $h^t = 0$. Increasing the repulsion $\beta$ lowers the interception point of the function with the y-axis while increasing its slope and maximal value. If $s$ fulfils the condition \eqref{eq:subtree_size_condition} for fixed $d$ there exists a critical value $\beta_{c}(d,s)$ such that for all $\beta>\beta_{c}(d,s)$ there are exactly two positive fixed points. In particular this allows a positive fixed point to exist for small $h^t$. As $h^t$ is decreasing in $t$ there exists a lower bound $t_{c}(\beta,d,s)$ for the time such that at least one positive fixed point exists for all $t\geq t_{c}(\beta,d,s)$ and thereby $h^t \leq h^{t_{c}}$.}%
\label{fig:critical_values}%
\end{figure}
It remains to show the existence of the fixed point $F^+>0$. From the analysis of the Ising model on Cayley trees in \cite{georgii_gibbs_2011} we know that the function
\begin{equation}
x \mapsto s\varphi_{\beta/2}(x-h)%
\end{equation}
has a positive fixed point if $\beta>\beta'(s)$ and $h \leq h'(\beta,s)$ where
\begin{equation}
\label{eq:critical_field_value}
h'\left(\beta,s\right)%
= \frac{s-1}{2}\beta + \frac{s-1}{2}\log(s-1) - \frac{s}{2}\log(s)+\mathcal{O}_{\beta}(1).%
\end{equation}
Therefore \eqref{eq:recursion_function_lambda} has a positive solution for parameters $\beta>\beta'(s)$ and $t>0$ that fulfil the condition
\begin{equation}
\label{eq:field_condition}
h^t + (d-s)\frac{\beta}{2} \leq h'(\beta,s).%
\end{equation}
For $s>(d+1)/2$ from \eqref{eq:critical_field_value} it follows that there exists a finite $\beta_{c}(d,s)>\beta'(s)$ such that \eqref{eq:field_condition} is a strict inequality for the limiting case $h^t=0$ for each $\beta > \beta_{c}(d,s)$, as depicted in the left part of Figure \ref{fig:critical_values}. Choosing $t_{c}(\beta,d,s)$ subsequently such that \eqref{eq:field_condition} is an equality for $h^{t_c}$ guarantees that it is fulfilled for all $t \geq t_{c}(\beta,d,s)$, as $h^t$ is a decreasing function in $t$. Therefore a positive fixed point $F^+>0$ exists for all $\beta>\beta_c(d,s)$ and $t\geq t_c(\beta,d,s)$.

An analogous argument shows that in case of the minus configuration on $\Delta\setminus\Lambda$ the boundary fields can be upper bounded by $f_{j0}[\eta_{\Lambda\setminus 0}-_{\Delta\setminus\Lambda}] \leq F^- = -F^+ < 0$ for all $j \in \boundary{0} \cap S$ and all $n \in \mathbb{N}$ for identical critical values. Setting $\epsilon = 2F^+$ concludes the proof.
\end{proof}

\subsection{Subtree percolation}
\label{subsec:subtree_percolation}

In this section we show that a fixed occupied site of the Cayley tree of order $d$ whose spins are distributed by the time-evolved intermediate measure has a positive probability of growing an occupied rooted subtree with $s$ or more children for each $s \leq d-1$ at any repulsion strength $\beta>0$, if the activity is greater than a critical value $\lambda_{b}(d)$. This leads to the typicality of bad configurations in the regime of Theorem \ref{thm:condition_bad_configurations} for large activities.

Note that the spin-flip dynamics \eqref{eq:single_site_kernel} does not change the distribution of occupied sites, therefore it suffices to investigate the intermediate measure without time-evolution. First we present a result regarding the non-extremality of this intermediate measure, which shows the necessity of the general zero-one law \ref{thm:zero-one_law} for possibly non-extremal measures for our results.
\begin{proposition}
\label{lemma:non-extremality_intermediate}
For $d \geq 2$, large repulsion $\beta$ and large activity $\lambda$ the intermediate measure $\mu^{\#}_{\beta,\lambda}$ is non-extremal.
\end{proposition}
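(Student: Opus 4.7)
The plan is to invoke the Kesten--Stigum bound for non-extremality of tree-indexed Markov chains: on the Cayley tree of order $d$, a stationary tree-indexed Markov chain is non-extremal as soon as the second-largest eigenvalue in modulus $\nu_{2}$ of its transition matrix $P$ satisfies $d\,\nu_{2}^{2} > 1$. The task therefore reduces to exhibiting a $P$-eigenvalue close to $1$ in the joint regime of large $\beta$ and large $\lambda$.

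First I would write down $P$ explicitly. Setting $a := \lambda^{1/(d+1)}$, the Widom--Rowlinson transfer operator takes the values $Q(\pm 1, \pm 1) = a^{2}$, $Q(\pm 1, \mp 1) = a^{2} e^{-\beta}$, $Q(\pm 1, 0) = Q(0, \pm 1) = a$ and $Q(0, 0) = 1$. Spin-flip symmetry of the intermediate boundary law, $l(+1) = l(-1) =: \ell$ and $l(0) = 1$, implies that $P(\omega, \omega') = Q(\omega, \omega')\, l(\omega') / \sum_{\omega''} Q(\omega, \omega'')\, l(\omega'')$ commutes with the involution $\omega \mapsto -\omega$ on the three-element state space. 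Consequently the antisymmetric vector $(1, 0, -1)^{\top}$ (in the ordering $-1, 0, +1$) is a $P$-eigenvector, with eigenvalue
\begin{equation*}
\nu \;=\; \frac{a^{2} \ell\,(1 - e^{-\beta})}{a^{2} \ell\,(1 + e^{-\beta}) + a}.
\end{equation*}

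Next I would analyse the intermediate boundary-law fixed-point equation, which under the symmetry ansatz and tree-automorphism invariance reduces via \eqref{eq:boundary_law_recursion} to
\begin{equation*}
\ell^{1/d}\,(2 a \ell + 1) \;=\; a^{2} \ell\,(1 + e^{-\beta}) + a.
\end{equation*}
A dominant-balance argument between the two leading terms $2 a \ell^{1+1/d}$ and $a^{2} \ell$ gives $\ell \sim (a/2)^{d}$ as $\beta,\lambda \to \infty$, so that $a \ell \asymp \lambda/2^{d} \to \infty$. Substituting back and letting $e^{-\beta} \to 0$, one obtains $\nu \to a \ell / (a \ell + 1) \to 1$. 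Since $d \geq 2$, the bound $d\,\nu^{2} > 1$ is then satisfied for all sufficiently large $\beta$ and $\lambda$, whence $d\,\nu_{2}^{2} > 1$ and Kesten--Stigum yields non-extremality of $\mu^{\#}_{\beta,\lambda}$.

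The main delicate step is verifying the joint-limit asymptotic $\ell \sim (a/2)^{d}$: one must check that the additive $+a$ on the right and the $+\ell^{1/d}$ on the left of the fixed-point equation do not disrupt the dominant balance, uniformly for $\beta$ large. Once this is in place, the eigenvalue computation and the invocation of Kesten--Stigum are both routine.
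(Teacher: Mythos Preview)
Your approach is essentially the same as the paper's: both invoke the Kesten--Stigum criterion, both exploit the spin-flip symmetry to identify the antisymmetric eigenvector $(1,0,-1)^{\top}$ of the transition matrix, and both arrive at the same eigenvalue formula (in the paper's variable $\xi_{\beta,\lambda}=a\ell$ this is $u_{2}=\frac{(1-e^{-\beta})\xi}{1+(1+e^{-\beta})\xi}$).

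The one place where the paper is tidier is precisely the step you flag as delicate. Rather than a dominant-balance analysis of the boundary-law equation in the variable $\ell$, the paper substitutes $\xi:=a\ell$ and rewrites the fixed-point equation as
\[
\xi \;=\; \lambda\left(\frac{1+(1+e^{-\beta})\xi}{1+2\xi}\right)^{d},
\]
from which the elementary observation that the bracketed fraction always lies in $[1/2,1]$ yields the two-sided bound $2^{-d}\lambda \le \xi_{\beta,\lambda}\le \lambda$, valid for \emph{every} $\beta>0$. This immediately gives $\lim_{\lambda\to\infty} u_{2}=\tanh(\beta/2)$ uniformly, and the non-extremality threshold becomes the explicit condition $\tanh(\beta/2)>1/\sqrt{d}$ together with $\lambda$ large. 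So you can replace your asymptotic heuristics by this one-line bound and avoid any uniformity worries in the joint limit.
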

\begin{proof}
This result follows from the Kesten-Stigum criterion \cite{kesten_additional_1966} as the in modulus second largest eigenvalue $u_2$ of the transition matrix for the intermediate measure fulfils the condition
\begin{equation}
u_2 > \frac{1}{\sqrt{d}}%
\end{equation}
for large repulsion $\beta$ and large activity $\lambda$ if $d \geq 2$ as we will show.

To calculate the transition matrix, we use Zachary's theorem (Theorem \ref{thm:bl_gibbs_measure}) and by applying the consistency condition for boundary laws \eqref{eq:boundary_law_recursion} we get an expression for the spin distribution of two vertices along an arbitrary edge $\{i,j\} \in E$:
\begin{equation}
\label{eq:edge_distribution}
\mu^{\#}_{\beta,\lambda}(\sigma_i=x, \sigma_j=y)%
= \frac{1}{Z}l_{\beta,\lambda}(x)Q_{\{i,j\}}(x,y)l_{\beta,\lambda}(y)%
\qquad \forall x,y \in \{-1,0,1\}.%
\end{equation}
Here the transfer operator is defined by the Widom-Rowlinson potential through Equation \eqref{eq:transfer_operator}. The boundary law for the intermediate measure is the homogeneous solution of the boundary law recursion \eqref{eq:boundary_law_recursion} with $l_{\beta,\lambda}(-1)=l_{\beta,\lambda}(+1)$ and has the representation
\begin{equation}
\big(l_{\beta,\lambda}(-1),l_{\beta,\lambda}(0),l_{\beta,\lambda}(1)\big)%
=\big(\xi_{\beta,\lambda}\lambda^{-\frac{1}{d+1}},1,\xi_{\beta,\lambda}\lambda^{-\frac{1}{d+1}}\big)%
\end{equation}
where $\xi_{\beta,\lambda}$ is the unique positive solution to the equation
\begin{equation}
\label{eq:xi_equation}
x=\lambda\left(\frac{1+(1+\euler^{-\beta})x}{1+2x}\right)^d,%
\end{equation}
see \cite{kissel_hard-core_2019}. Note, that while the exact value of $\xi_{\beta,\lambda}$ is dependent on $\beta$ it is always contained in the interval $(2^{-d}\lambda,\lambda)$ and can therefore be controlled by the activity $\lambda$. Using the distribution of neighbouring spins \eqref{eq:edge_distribution} with an appropriate normalisation the homogeneous matrix of transition probabilities for the intermediate measure $\smash[b]{\mu^{\#}_{\beta,\lambda}}$ takes the form
\begin{equation}
\label{eq:transition_matrix}
P^{\#}_{\beta,\lambda} = \frac{1}{1+(1+\euler^{-\beta})\xi_{\beta,\lambda}}
\begin{pmatrix}
\xi_{\beta,\lambda} & 1 & \euler^{-\beta}\xi_{\beta,\lambda} \\
\alpha_{\beta,\lambda}\xi_{\beta,\lambda} & \alpha_{\beta,\lambda} & \alpha_{\beta,\lambda}\xi_{\beta,\lambda} \\
\euler^{-\beta}\xi_{\beta,\lambda} & 1 & \xi_{\beta,\lambda} \\
\end{pmatrix}
\end{equation}
where
\begin{equation}
\alpha_{\beta,\lambda} := \frac{1+(1+\euler^{-\beta})\xi_{\beta,\lambda}}{1+2\xi_{\beta,\lambda}}.%
\end{equation}
The eigenvalues of $P^{\#}_{\beta,\lambda}$ ordered by absolute value are:
\begin{equation}
\label{eq:eigenvalues}
u_1 = 1 \qquad%
u_2 = \frac{(1-\euler^{-\beta})\xi_{\beta,\lambda}}{1+(1+\euler^{-\beta})\xi_{\beta,\lambda}} \qquad%
u_3 = -\frac{u_2}{1+2\xi_{\beta,\lambda}}%
\end{equation}
As $\xi_{\beta,\lambda}$ can be solely controlled by $\lambda$ we get
\begin{equation}
\lim_{\lambda \rightarrow \infty} u_2 = \frac{1-\euler^{-\beta}}{1+\euler^{-\beta}} = \tanh(\beta/2)%
\qquad \forall \beta>0.%
\end{equation}
Therefore for any repulsion strength $\beta$ with
\begin{equation}
\label{eq:kesten-stigum_ising}
\tanh(\beta/2) > \frac{1}{\sqrt{d}}%
\end{equation}
and sufficently large $\lambda$ the Kesten-Stigum criterion is fulfilled and the measure is non-extremal.
\end{proof}
The criterion \eqref{eq:kesten-stigum_ising} yields the critical value at which the intermediate measure for the Ising model on the Cayley tree of order $d$ with parameter $\beta/2$ transitions from extremal to non-extremal, for more information on this transition of the intermediate measure for the Ising model see \cite{bleher_purity_1995,gandolfo_glassy_2020,ioffe_extremality_1996,pemantle_critical_2010}. This is consistent with the observation that the Widom-Rowlinson model of repulsion strength $\beta$ conditioned on full occupation yields the Ising model with parameter $\beta/2$.

Next we briefly look at the \emph{occupation measure} describing the distribution of occupied sites. For the intermediate measure $\smash[b]{\mu^{\#}_{\beta,\lambda}}$ this measure proves to be a tree-indexed Markov chain: We define the mapping $\tau:\Omega \rightarrow \{0,1\}^V$ with $\tau(\omega) := (|\omega_i|)_{i\in V}$.
\begin{lemma}
\label{lemma:occupation_markov_chain}
The occupation measure for the intermediate measure defined by $\mu^{\#}_{\beta,\lambda}\circ \tau^{-1}$ on $(\{0,1\}^V,\mathcal{P}(\{0,1\})^{\otimes V})$ is a tree-indexed Markov chain.
\end{lemma}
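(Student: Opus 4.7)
The plan is to exploit the fact that the transition matrix $P^{\#}_{\beta,\lambda}$ of the intermediate measure is \emph{lumpable} in the sense of Kemeny--Snell with respect to the partition $\{\{-1,+1\},\{0\}\}$ of the local state space, so that the coarse-graining $\tau$ turns the tree-indexed Markov chain into another tree-indexed Markov chain on the reduced state space $\{0,1\}$. Lumpability is inherited here from the global spin-flip symmetry built into the intermediate boundary law $l_{\beta,\lambda}(-1)=l_{\beta,\lambda}(+1)$.

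First I would read off the rows of $P^{\#}_{\beta,\lambda}$ in \eqref{eq:transition_matrix}: the transition probability from $+1$ into $\{-1,+1\}$ equals $(\xi_{\beta,\lambda}+\euler^{-\beta}\xi_{\beta,\lambda})/(1+(1+\euler^{-\beta})\xi_{\beta,\lambda})$, and this coincides with the analogous quantity starting from $-1$; likewise the transition probability into $\{0\}$ is $1/(1+(1+\euler^{-\beta})\xi_{\beta,\lambda})$ from either of $\pm 1$. This is precisely the Kemeny--Snell lumpability condition, and it yields a well-defined induced transition matrix $\hat P$ on $\{0,1\}$. Next I would verify the tree-indexed Markov property of the push-forward $\mu^{\#}_{\beta,\lambda}\circ\tau^{-1}$ directly from Definition \ref{def:tree_indexed_markov_chain}. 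Fix an oriented edge $\edge{ij}$ and $a\in\{0,1\}$, and let $\mathcal{G}$ denote the $\sigma$-algebra generated by $(\tau_k)_{k\in(-\infty,\edge{ij})}$, which satisfies $\mathcal{G}\subset \mathcal{F}_{(-\infty,\edge{ij})}$. Using the tower property together with the tree-indexed Markov property of $\mu^{\#}_{\beta,\lambda}$ under the finer conditioning on $\mathcal{F}_{(-\infty,\edge{ij})}$, one obtains $\mu^{\#}_{\beta,\lambda}$-a.s.
\begin{equation*}
\mu^{\#}_{\beta,\lambda}(\tau_j=a\mid\mathcal{G})
=\sum_{\omega_i\in\{-1,0,1\}}\mu^{\#}_{\beta,\lambda}(\sigma_i=\omega_i\mid\mathcal{G})\Bigl(\sum_{\omega_j:\,|\omega_j|=a}P^{\#}_{\beta,\lambda}(\omega_i,\omega_j)\Bigr).
\end{equation*}
The inner sum equals $\hat P(|\omega_i|,a)$ by the lumpability step, and since $\tau_i$ is $\mathcal{G}$-measurable while every $\omega_i$ with non-zero weight in the outer sum satisfies $|\omega_i|=\tau_i$, the factor $\hat P(\tau_i,a)$ pulls out of the sum, leaving $\hat P(\tau_i,a)\cdot 1$. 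Running the same computation with $\sigma(\tau_i)$ in place of $\mathcal{G}$ yields the same $\hat P(\tau_i,a)$, which is exactly the identity \eqref{eq:markov_chain_property} for the push-forward measure.

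The main point requiring care is the implicit identity $\mu^{\#}_{\beta,\lambda}(\sigma_j=\omega_j\mid\sigma_i=\omega_i,\mathcal{G})=P^{\#}_{\beta,\lambda}(\omega_i,\omega_j)$ used inside the tower computation. This is justified because augmenting $\mathcal{G}$ by $\sigma(\sigma_i)$ still yields a sub-$\sigma$-algebra of $\mathcal{F}_{(-\infty,\edge{ij})}$, so the original tree-indexed Markov property of $\mu^{\#}_{\beta,\lambda}$ applies. Once this is handled, the rest is bookkeeping with the explicit matrix already in hand, and I do not expect any further analytic obstacle.
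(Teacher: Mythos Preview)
Your proof is correct and follows essentially the same route as the paper: tower property to pass from the occupation $\sigma$-algebra $\mathcal{F}^{oc}_{(-\infty,\edge{ij})}$ to the full $\mathcal{F}_{(-\infty,\edge{ij})}$ (or equivalently to $\sigma(\sigma_i,\mathcal{G})$), then the tree-indexed Markov property of $\mu^{\#}_{\beta,\lambda}$ to reduce to $\mathcal{F}_i$, and finally the spin-flip symmetry of the rows of $P^{\#}_{\beta,\lambda}$ to conclude that the resulting quantity depends on $\sigma_i$ only through $|\sigma_i|$. The only cosmetic difference is that you package the symmetry observation as Kemeny--Snell lumpability and write out the intermediate conditioning on $\sigma_i$ explicitly, whereas the paper simply records the identity $\mu^{\#}_{\beta,\lambda}(|\sigma_j|=x\mid\mathcal{F}_i)(+1)=\mu^{\#}_{\beta,\lambda}(|\sigma_j|=x\mid\mathcal{F}_i)(-1)$ and infers $\mathcal{F}^{oc}_i$-measurability directly; the underlying argument is identical.
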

\begin{proof}
For any $S \subset V$ we define $\mathcal{F}^{oc}_S:=\sigma(|\sigma_i|, i \in S)$ the $\sigma$-algebra generated by the occupation numbers in $S$. Using first the tower property for conditional probabilities and then the Markov-chain property of the intermediate measure we get
\begin{equation}
\label{eq:tower_property}
\begin{split}
\mu^{\#}_{\beta,\lambda}(|\sigma_j| = x | \mathcal{F}^{oc}_{(-\infty,ij)})%
&= \mu^{\#}_{\beta,\lambda}\big(\mu^{\#}_{\beta,\lambda}(|\sigma_j| = x | \mathcal{F}_{(-\infty,ij)})%
\big| \mathcal{F}^{oc}_{(-\infty,ij)}\big) \\%
&= \mu^{\#}_{\beta,\lambda}\big(\mu^{\#}_{\beta,\lambda}(|\sigma_j| = x | \mathcal{F}_{i})%
\big| \mathcal{F}^{oc}_{(-\infty,ij)}\big).%
\end{split}
\end{equation}
Due to the symmetries of the transition matrix \eqref{eq:transition_matrix} of the intermediate measure we have
\begin{equation}
\label{eq:f_measurability}
\mu^{\#}_{\beta,\lambda}(|\sigma_j| = x | \mathcal{F}_{i})(+1)%
= \mu^{\#}_{\beta,\lambda}(|\sigma_j| = x | \mathcal{F}_{i})(-1)%
\qquad \forall x \in \{0,1\}%
\end{equation}
for each edge $\{i,j\}\in E$, which shows that $\mu^{\#}_{\beta,\lambda}(|\sigma_j| = x | \mathcal{F}_{i})$ is $\mathcal{F}^{oc}_i$-measurable. Therefore the last line of \eqref{eq:tower_property} is $\mathcal{F}^{oc}_i$-measurable which yields
\begin{equation}
\mu^{\#}_{\beta,\lambda}(|\sigma_j| = x | \mathcal{F}^{oc}_{(-\infty,ij)})%
= \mu^{\#}_{\beta,\lambda}(|\sigma_j| = x | \mathcal{F}^{oc}_{i}).%
\end{equation}
This holds for arbitrary $x \in \{0,1\}$ and all edges $\{i,j\} \in E$, proving the Markov chain property for the occupation measure.
\end{proof}
From the explicit form of the transition matrix we can estimate the transition probabilities of the occupation measure for large and small activities respectively.
\begin{lemma}
\label{lemma:transition_probability}
The transition probability between neighbouring occupied sites of the $\mu^{\#}_{\beta,\lambda}$-measure can be controlled by the activity $\lambda$, to be more precise
\begin{alignat}{2}
&\lim_{\mathclap{\lambda\rightarrow\infty}}\ \inf_{\beta>0}\ %
&&\mu^{\#}_{\beta,\lambda}(|\sigma_j|=1|\,|\sigma_i|=1) = 1 \\%
&\lim_{\mathclap{\lambda\rightarrow 0}}\ \sup_{\beta>0}\ %
&&\mu^{\#}_{\beta,\lambda}(|\sigma_j|=1|\,|\sigma_i|=1) = 0%
\end{alignat}
for all edges $\{i,j\} \in E$ and for arbitrary fixed $d\in\mathbb{N}$.
\end{lemma}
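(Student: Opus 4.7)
The plan is to extract the transition probability in closed form directly from the explicit transition matrix $P^{\#}_{\beta,\lambda}$ given in \eqref{eq:transition_matrix} and then exploit the bounds on $\xi_{\beta,\lambda}$ already recorded in the paper. First I would reduce the two-sided condition $|\sigma_i|=1$ to a single-sign condition using the $\pm$-symmetry noted in \eqref{eq:f_measurability}: by that symmetry $\mu^{\#}_{\beta,\lambda}(|\sigma_j|=1\mid|\sigma_i|=1)=\mu^{\#}_{\beta,\lambda}(|\sigma_j|=1\mid\sigma_i=+1)$. Reading the $(+1,+1)$ and $(+1,-1)$ entries off the bottom row of \eqref{eq:transition_matrix} and summing yields the closed-form expression
\begin{equation*}
\mu^{\#}_{\beta,\lambda}\bigl(|\sigma_j|=1\,\bigl|\,|\sigma_i|=1\bigr)
=\frac{(1+\euler^{-\beta})\,\xi_{\beta,\lambda}}{1+(1+\euler^{-\beta})\,\xi_{\beta,\lambda}}
=\frac{\rho_{\beta,\lambda}}{1+\rho_{\beta,\lambda}},
\end{equation*}
where I write $\rho_{\beta,\lambda}:=(1+\euler^{-\beta})\,\xi_{\beta,\lambda}$.

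Next I would observe that the map $t\mapsto t/(1+t)$ is strictly increasing on $[0,\infty)$, so the entire task reduces to bounding $\rho_{\beta,\lambda}$ from below and above uniformly in $\beta>0$. For this step I use two elementary facts: the trivial bracket $1\le 1+\euler^{-\beta}\le 2$ valid for every $\beta>0$, and the $\beta$-uniform inclusion $\xi_{\beta,\lambda}\in(2^{-d}\lambda,\lambda)$ stated immediately after \eqref{eq:xi_equation}. Combining them gives
\begin{equation*}
2^{-d}\lambda \;\le\; \rho_{\beta,\lambda}\;\le\; 2\lambda \qquad\text{for all } \beta>0.
\end{equation*}

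Finally, monotonicity of $t\mapsto t/(1+t)$ propagates these uniform bounds: I conclude that
\begin{equation*}
\frac{2^{-d}\lambda}{1+2^{-d}\lambda}
\;\le\; \inf_{\beta>0}\mu^{\#}_{\beta,\lambda}\bigl(|\sigma_j|=1\,\bigl|\,|\sigma_i|=1\bigr)
\;\le\;\sup_{\beta>0}\mu^{\#}_{\beta,\lambda}\bigl(|\sigma_j|=1\,\bigl|\,|\sigma_i|=1\bigr)
\;\le\;\frac{2\lambda}{1+2\lambda}.
\end{equation*}
Letting $\lambda\to\infty$ forces the lower bound to $1$, establishing the first limit, and letting $\lambda\to 0$ forces the upper bound to $0$, establishing the second.

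There is no real obstacle here beyond bookkeeping: the transition matrix is given explicitly, its $\pm$-symmetry is already used in Lemma \ref{lemma:occupation_markov_chain}, and the key $\beta$-uniform two-sided control on $\xi_{\beta,\lambda}$ is an input from \cite{kissel_hard-core_2019} quoted in the paper. The only point worth checking carefully is that both the upper and lower bounds on $\rho_{\beta,\lambda}$ are genuinely $\beta$-free, which follows because the factor $1+\euler^{-\beta}$ is squeezed between the $\beta$-free constants $1$ and $2$.
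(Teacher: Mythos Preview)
Your proposal is correct and follows essentially the same route as the paper: both extract the closed form $\frac{(1+\euler^{-\beta})\xi_{\beta,\lambda}}{1+(1+\euler^{-\beta})\xi_{\beta,\lambda}}$ from the transition matrix \eqref{eq:transition_matrix}, sandwich the numerator using $1\le 1+\euler^{-\beta}\le 2$ and the $\beta$-uniform bound $\xi_{\beta,\lambda}\in(2^{-d}\lambda,\lambda)$, and then send $\lambda\to\infty$ or $\lambda\to 0$. The only cosmetic difference is that you package the two factors into $\rho_{\beta,\lambda}$ and invoke monotonicity of $t\mapsto t/(1+t)$ explicitly, whereas the paper writes out the intermediate chain $\frac{2\xi}{1+2\xi}>\cdots>\frac{\xi}{1+\xi}$.
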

\begin{proof}
The transition probabilities are given by the transition matrix $P^{\#}_{\beta,\lambda}$ \eqref{eq:transition_matrix} from the proof of Lemma \ref{lemma:non-extremality_intermediate}. Due to the matrix symmetry we get
\begin{equation}
\mu^{\#}_{\beta,\lambda}(|\sigma_j|=1|\,|\sigma_i|=1)%
= \frac{(1+\euler^{-\beta})\xi_{\beta,\lambda}}{1+(1+\euler^{-\beta})\xi_{\beta,\lambda}}.%
\end{equation}
Also from the previous proof we know that $\xi_{\beta,\lambda}$ is bounded by $(2^{-d}\lambda,\lambda)$, therefore we can estimate the transition probability in the following way:
\begin{equation}
\frac{2\lambda}{1+2\lambda}%
> \frac{2\xi}{1+2\xi}%
> \mu^{\#}_{\beta,\lambda}(|\sigma_j|=1|\,|\sigma_i|=1)%
> \frac{\xi}{1+\xi}%
> \frac{2^{-d}\lambda}{1+2^{-d}\lambda}%
\qquad \forall \beta>0.%
\end{equation}
Taking the limit for large and small activities respectively completes the proof.
\end{proof}
Finally, using the estimations for the transition probabilities in Lemma \ref{lemma:transition_probability} we can prove that an active site in the sense of the zero-one law \ref{thm:zero-one_law} creates an occupied rooted tree of sufficient size with positive probability.
\begin{proof}[Proof of Proposition \ref{prop:probability_subtrees}]
Let $p_s(\beta,\lambda,d)$ denote the probability that a fixed occupied site on the Cayley tree of order $d$ whose sites are occupied according to the time-evolved intermediate measure $\mu^{\#}_{\beta,\lambda,t}$ is the root of an occupied subtree where each vertex has at least $s$ children. Note that the distribution of occupied sites does not depend on $t$, as the spin-flip dynamics \eqref{eq:single_site_kernel} does not affect the positions of occupied sites. It is therefore sufficient to work with the measure $\mu^{\#}_{\beta,\lambda}$. We show the existence of a critical activity $\lambda_{b}(d) \in (0,\infty)$ such that
\begin{equation}
p_s(\beta,\lambda,d) > 0%
\end{equation}
for all $\beta > 0$ and all $s \leq d-1$ if $\lambda \geq \lambda_{b}(d)$.

Denote by $p^{(n)}_s(\beta,\lambda,d)$ the probability that an occupied site on the Cayley tree of order $d$ is the root of a finite subtree of $n$ generations where each vertex has at least $s$ occupied children, then we get
\begin{equation}
\lim_{n\rightarrow\infty}p^{(n)}_s(\beta,\lambda,d) = p_s(\beta,\lambda,d).%
\end{equation}
The root site is chosen to be occupied, therefore we have $p^{(0)}_s(\beta,\lambda,d) = 1$ and as the growing of the tree follows a binomial distribution we get the $\beta$-independent inequality
\begin{equation}
p^{(n+1)}_s(\beta,\lambda,d)%
\geq \sum_{k = s}^d%
\binom{d}{k}\big(u_{\lambda}\,p^{(n)}_s(\beta,\lambda,d)\big)^k%
\big(1-u_{\lambda}\,p^{(n)}_s(\beta,\lambda,d)\big)^{d-k} \qquad \forall \beta>0,%
\end{equation}
with $u_{\lambda} := \inf_{\beta>0}\mu^{oc}_{\beta,\lambda}(|\sigma_y|=1|\,|\sigma_x|=1)$. It suffices to show the positivity of $p_{d-1}(\beta,\lambda,d)$, as a subtree of size $d-1$ already contains an $s$ subtree for every $s \leq d-1$.

A fixed point $p_{\lambda} \in (0,1)$ for the mapping
\begin{equation}
\label{eq:tree_recursion}
p \mapsto (u_{\lambda}p)^d + d (u_{\lambda}p)^{d-1}(1-u_{\lambda}p)%
\end{equation}
would be a lower bound for $p^{(n)}_{d-1}(\beta,\lambda,d)$ for all $\beta > 0$ and all $n \in \mathbb{N}$. We show that such a fixed point exists for large activities $\lambda$:

The function
\begin{equation}
g(x) = x^d + d x^{d-1}(1-x)%
\end{equation}
is continuous and takes the values $g(0)=0$, $g(1)=1$ as well as $g'(0)=g'(1)=0$ for each $d \geq 2$. By the intermediate value theorem the set of fixed points of $g$ on $(0,1)$ is non-empty and as $g$ is a polynomial it is finite. Choosing $x_c \in (0,1)$ to be the largest of these fixed points guarantees that $g(x)>x$ for all $x \in (x_c,1)$ as $g'(1)=0$. By Lemma \ref{lemma:transition_probability} we can choose a critical activity $\lambda_{b}(d)$ such that for any $\lambda \geq \lambda_{b}(d)$ the transition probability $u_{\lambda}$ is close enough to $1$ so that there exists a fixed point $x_{\lambda} \in (x_c,u_{\lambda})$ for
\begin{equation}
\label{eq:tree_recursion_reparam}
x \mapsto u_{\lambda}g(x).%
\end{equation}
Substituting $p_{\lambda}:=x_{\lambda}/u_{\lambda} \in (0,1)$ in \eqref{eq:tree_recursion_reparam} shows that $p_{\lambda}$ is a fixed point for \eqref{eq:tree_recursion}. Therefore we have the positive lower bound
\begin{equation}
p_c := \inf_{\lambda \geq \lambda_{b}(d)}p_{\lambda}%
\geq \inf_{\lambda \geq \lambda_{b}(d)} x_{\lambda}%
\geq x_c > 0%
\end{equation}
for $p^{(n)}_{d-1}(\beta,\lambda,d)$ for all $\beta>0$, $\lambda \geq \lambda_{b}(d)$ and all $n \in \mathbb{N}$. Taking the limit gives
\begin{equation}
p_{s}(\beta,\lambda,d) \geq p_{d-1}(\beta,\lambda,d) \geq p_c > 0%
\end{equation}
for all $\beta > 0$ and all $s \leq d-1$ if $\lambda \geq \lambda_{b}(d)$, which proves the proposition.
\end{proof}

\subsection{Proof of Theorem \ref{thm:almost-sure-badness}}
\label{subsec:ultimate_proof}

Combining all previous results we get a proof for the main result:
\begin{proof}
\label{proof:almost-sure-badness}
First, by Theorem \ref{thm:condition_bad_configurations}, for $d \geq 4$ with the choice $s=d-1$ we get finite critical values $\beta_{b}(d)>0$, $t_{b}(\beta,d)>0$ such that all configurations containing an occupied rooted subtree of order $d-1$ are bad for $\beta>\beta_{b}(d)$ and all times $t \geq t_{b}(\beta,d)$. By Proposition \ref{prop:probability_subtrees} there exists a critical activity $\lambda_{b}(d)$ such that these bad configurations have positive probability for all $\lambda \geq \lambda_{b}(d)$. Therefore the set of all bad configurations has positive probability and using the zero-one law \ref{thm:zero-one_law} we see that the set of bad configurations is an almost sure event.
\end{proof}

\section{Proofs: Goodness}
\label{sec:proofs_goodness}

To prove almost-sure Gibbsianness we will use the transition probability
\begin{equation}
u_{\beta,\lambda} := \mu^{\#}_{\beta,\lambda}(|\sigma_y|=1|\,|\sigma_x|=1).%
\end{equation}
and Galton-Watson trees. A Galton-Watson tree is a random rooted tree constructed in the following way. Starting at the root one chooses a random number of children according to a known distribution. Then for every child one independently chooses according to the same distribution again the number of children. This procedure will be repeated for every new child.
\begin{proof}[Proof of Theorem \ref{thm:almost-sure-goodness}]
It is known that a Galton-Watson tree has almost surely no infinite connected component if the expected number of children is smaller than one. In our case the offspring distribution is given by a binomial random variable with probability of success less than $u_{\beta,\lambda}$. If $u_{\beta,\lambda} < 1/d$ there almost surely cannot exist an infinite connected component of occupied sites inside the tree. Since $\lim_{\lambda \rightarrow 0}\sup_{\beta>0} u_{\beta,\lambda} = 0$ by Lemma \ref{lemma:transition_probability} there exists a critical $\lambda_{g}(\beta,d) \in (0,\infty)$ such that $u_{\beta,\lambda} < 1/d$ for all $\lambda < \lambda_{g}(\beta,d)$. Only configurations with an infinite cluster of occupied sites can be bad which implies that for all $\lambda < \lambda_{g}(\beta,d)$ the measure $\mu^\#_{\beta,\lambda,t}$ is almost surely Gibbs.  
\end{proof}
\begin{proof}[Proof of Theorem \ref{thm:almost-sure-goodness_dobrushin}]
In \cite{kissel_dynamical_2020} the authors have investigated the soft-core Widom-Rowlinson models on the lattice $V=\mathbb{Z}^d$. They used Dobrushin-uniqueness theory to show that the time-evolved measure is Gibbs for small times. A quasilocal specification $\gamma$ satisfies the Dobrushin condition if
\begin{equation}
\sup_{i\in V}\;\;\sum_{j\in V} \;\;\sup_{\eta,\zeta\in \Omega :\,\eta_{V\backslash{j} }=\zeta_{V\backslash {j}}}%
\mathrm{d}_{TV,i}(\gamma_{\{i\}}(\,\cdot\,\vert \eta_{V\backslash \{i\}}),%
\gamma_{\{i\}}(\,\cdot\,\vert \zeta_{V\backslash \{i\}})) < 1%
\end{equation}
where $\mathrm{d}_{TV,i}$ is the total variation distance for measures on $(\Omega_{\{i\}},\mathcal{F}_{\{i\}})$. In other words the Dobrushin condition is satisfied if changing only one site in the boundary condition has not a big effect. This condition can be used to prove that there exists a unique Gibbs measure for the specification. Furthermore, one can use the Dobrushin comparison Theorem which is an important ingredient to prove short-time Gibbsianness. The Theorem 2.8 in \cite{kissel_dynamical_2020} states that if the absolute value of the external magnetic field $h$ and $\lambda$ are big enough the soft-core Widom-Rowlinson satisfies the Dobrushin condition. This not only shown for the lattice but also for general locally finite graphs. As Cayley trees are in fact locally finite graphs one can repeat the steps in \cite{kissel_dynamical_2020} to prove that the time-evolved measure is Gibbs. Moreover, this result does not need that the starting measure is  $\mu_{\beta,\lambda}^\#$. It holds for every starting Gibbs measure for the soft-core Widom-Rowlinson model.
\end{proof}
\begin{remark}
In \cite{kissel_dynamical_2020} it is proven that the soft-core Widom-Rowlinson model satisfies the Dobrushin condition if $\beta (d+1)< 2$. Hence for small enough $\beta$ the time-evolved model is Gibbs for all times t>0.
\end{remark}

\printbibliography

\end{document}